\theoremstyle{thmstyleone}%
\newtheorem{theorem}{Theorem}[section]% meant for sectionwise numbers
\newtheorem{proposition}[theorem]{Proposition}% 
\newtheorem{lem}[theorem]{Lemma}% 
\newtheorem{rem}[theorem]{Remark}%
\theoremstyle{thmstyletwo}%
\theoremstyle{thmstylethree}%
\newtheorem{definition}{Definition}%
\newcommand\D{\displaystyle}
\newcommand\dS{\textnormal{d}S}
\newcommand\dx{\textnormal{d}x}
\newcommand\dt{\textnormal{d}t}
\newcommand\dd{\textnormal{d}}
\newcommand{\cts}[1]{\left[#1\right]} 
\newcommand{\pts}[1]{\left(#1\right)} 
\newcommand{\abs}[1]{\left\lvert #1\right \rvert}   
\def\Xint#1{\mathchoice
{\XXint\displaystyle\textstyle{#1}}%
{\XXint\textstyle\scriptstyle{#1}}%
{\XXint\scriptstyle\scriptscriptstyle{#1}}%
{\XXint\scriptscriptstyle\scriptscriptstyle{#1}}%
\!\int}
\def\XXint#1#2#3{{\setbox0=\hbox{$#1{#2#3}{\int}$ }
\vcenter{\hbox{$#2#3$ }}\kern-.6\wd0}}
\def\dashint{\Xint-}
 \newcommand{\nor}[1]{\left\|#1\right\|}   
 \newcommand{\lvs}[1]{\left\{#1\right\}} 
\begin{document}

\title[Controllability of semilinear shadow systems]{Controllability of some semilinear shadow reaction-diffusion systems}

%%=============================================================%%
%% Prefix	-> \pfx{Dr}
%% GivenName	-> \fnm{Joergen W.}
%% Particle	-> \spfx{van der} -> surname prefix
%% FamilyName	-> \sur{Ploeg}
%% Suffix	-> \sfx{IV}
%% NatureName	-> \tanm{Poet Laureate} -> Title after name
%% Degrees	-> \dgr{MSc, PhD}
%% \author*[1,2]{\pfx{Dr} \fnm{Joergen W.} \spfx{van der} \sur{Ploeg} \sfx{IV} \tanm{Poet Laureate} 
%%                 \dgr{MSc, PhD}}\email{iauthor@gmail.com}
%%=============================================================%%

\author[]{\fnm{V\'ictor} \sur{Hern\'andez-Santamar\'ia}}\email{victor.santamaria@im.unam.mx}

\author[]{\fnm{Alberto} \sur{Pe\~{n}a-Garc\'ia}\email{pgbeto87@gmail.com}}
%\equalcont{These authors contributed equally to this work.}

\affil[]{\orgdiv{Instituto de Matem\'{a}ticas}, \orgname{Universidad Nacional Aut\'{o}noma de M\'{e}xico}, \orgaddress{\street{Circuito Exterior C.U.,} \city{CDMX}, \postcode{04510}, \country{Mexico}}}

%%==================================%%
%% sample for unstructured abstract %%
%%==================================%%

\abstract{ 
The shadow limit is a versatile tool used to study the reduction of reaction-diffusion systems into simpler PDE-ODE models by letting one of the diffusion coefficients tend to infinity. This reduction has been used to understand different qualitative properties and their interplay between the original model and its reduced version. The aim of this work is to extend previous results about the controllability of linear reaction-diffusion equations and how this property is inherited by the corresponding shadow model. Defining a suitable class of nonlinearities and improving some uniform Carleman estimates, we extend the results to the semilinear case and  prove that the original model is null-controllable and that the shadow limit preserves this important feature. 
}

\keywords{Shadow limit, semilinear reaction-diffusion equations, uniform null-controllability, Carleman estimates.}

\pacs[MSC Classification]{35K57, 93B05, 93B07, 93C20.}

\maketitle

\section{Introduction}\label{seccion-1}

\subsection{Motivation}

Reaction-diffusion systems have been established as one of the main tools for modeling biological, chemical, and biological processes. These systems are commonly composed by coupled semi-linear parabolic equations endowed with zero-flux boundary conditions. To fix ideas, let us consider the following model
\begin{equation}\label{eq:r-d_intro}
    \begin{cases}
    y_t-\sigma_1\Delta y = f(y,z) &\textnormal{in } (0,T)\times \Omega, \\
    z_t-\sigma_2\Delta z= g(y,z) &\textnormal{in } (0,T)\times \Omega, \\
    \D \frac{\partial y}{\partial \nu}=\frac{\partial z}{\partial \nu}=0 &\text{on } (0,T)\times\partial \Omega, \\
    y(0,\cdot)=y^0,\quad z(0,\cdot)=z^0  &\text{in }\ \Omega. 
    \end{cases}
\end{equation}

Here, $T>0$ is given, $\Omega\subset \mathbb{R}^{N}$ ($N\in\mathbb N^*$) is a nonempty, bounded set with smooth boundary $\partial \Omega$ and $\nu$ denotes the unit outward normal vector to $\partial\Omega$. In the remainder of this document, to abridge the notation, we set $Q_T:=(0,T)\times \Omega$ and $\Sigma_T=(0,T)\times \partial \Omega$.

In system \eqref{eq:r-d_intro}, $f$ and $g$ are two suitable nonlinear functions, $\sigma_1,\sigma_2>0$ represent the diffusion rates of the variables $y=y(t,x)$ and $z=z(t,x)$, respectively, and $y^0, z^0$ are their corresponding initial data. 

The paradigmatic model \eqref{eq:r-d_intro} has been studied extensively from many different angles: nonnegativity properties, monotonicity, entropy inequalities, long-time behavior, blow-up, pattern formation, among others. The literature is very rich so we refer the reader (for instance) to the monographs \cite{OL01}, \cite{Mur02}, and \cite{Per15} and the references therein for further material. 

As pointed out in \cite{Kow21}, reaction-difussion systems like \eqref{eq:r-d_intro} can exhibit quite complex structures and from a mathematical point of view it makes sense to reduce their complexity to understand the dynamics of the the full system while preserving its main properties. 

In this direction, a successful model reduction tool can be found in the seminal papers \cite{Kee78} and \cite{Nis82}. In such works, the idea is to study the limit of system \eqref{eq:r-d_intro} when one diffusion coefficient is fixed and the other tends to infinity. For instance, if $\sigma_1=1$ and $\sigma:=\sigma_2\to+\infty$ in \eqref{eq:r-d_intro}, due to the boundary conditions, the variable $z$ becomes spatially homogeneous, i.e. $z(t,x)=\xi(t)$, and it can be proved that system \eqref{eq:r-d_intro} becomes
\begin{equation}\label{eq:shadow_intro}
    \begin{cases}
    y_t-\Delta y = f(y,\xi) &\text{in } Q_T, \\
    \D \dot{\xi}=\dashint_{\Omega} g(y(t,x),\xi(t))\dx &\text{in } (0,T), \\
    \D \frac{\partial y}{\partial \nu}=0 &\text{on } \Sigma_T, \\
    \D y(0,\cdot)=y^0 \quad\text{in } \Omega, \quad \xi(0)=\dashint_{\Omega}z^0 \dx,
    \end{cases}
\end{equation}
where for any $u\in L^1(\Omega)$ we write 
\begin{equation*}
    \dashint_{\Omega} u \dx:=\frac{1}{|\Omega|}\int_{\Omega} u(x)\dx
\end{equation*}
and where $\abs{\Omega}$ denotes the measure of the set $\Omega$. 

The resulting equation \eqref{eq:shadow_intro} is the so-called shadow system and the limit behavior between \eqref{eq:r-d_intro} and \eqref{eq:shadow_intro} and some related properties have been studied in numerous works, see e.g. \cite{Tak86,HS89,Miy05,LN09,KT17,MCM17,MCHSKS18,LMCM21}. As a model reduction, system \eqref{eq:shadow_intro} has been successfully used to understand some properties of the original system \eqref{eq:r-d_intro} and vice versa. For instance in \cite{Tak86}, under suitable assumptions (mainly on $f$ and $g$), a stationary solution of the shadow problem allows to find a stationary solution of the original one, while in \cite{Miy05} it is shown that stability properties of those stationary states is preserved between the models. In the same spirit, in \cite{HS89}, it has been shown that compact attractors for the original reaction-diffusion system and its shadow limit are closely related. We conclude this part by mentioning that not always the dynamics and properties of the shadow system and the original one are related: for instance, it may happen that under suitable conditions the solutions of the shadow system blow-up in finite time while the original one has global solutions in time (see \cite{LN09}).

%Note further that \eqref{eq:shadow_intro} is non-local due to the presence of the integral term in the second equation. 

%%%%%%%%%%%%%%%%%%%%%%%%%%%%%%%%%%%%%%%%%%%%%%%%%%%%%%%%%%%%%%
          
\subsection{Problem formulation and main result}\label{seccion-2}

Motivated by the discussion above, the aim of this paper is to study some controllability properties of \eqref{eq:shadow_intro} by using the shadow-model reduction of the original system \eqref{eq:r-d_intro}.

To formulate this problem, let $\omega\subset \Omega$ be a nonempty, possibly small, open set and denote by $\mathbf 1_{\omega}$ its characteristic function. 
%Hereinafter, we write $q_T:=(0,T)\times \omega$. 
We begin by considering the system  

\begin{equation}\label{eq:r-d_controlled-NL}
    \begin{cases}
    y_t-\Delta y = f(y,z)+h\mathbf{1}_{\omega} &\textnormal{in } Q_T, \\
    z_t-\sigma\Delta z= g(y,z) &\textnormal{in } Q_T, \\
    \D \frac{\partial y}{\partial \nu}=\frac{\partial z}{\partial \nu}=0 &\text{on } \Sigma_T, \\
    y(0,\cdot)=y^0,\quad z(0,\cdot)=z^0  &\text{in }\ \Omega, 
    \end{cases}
\end{equation}
where $h=h(t,x)$ is an external control force. 

In the remainder of this document, we shall make the following instrumental assumptions on the nonlinear reaction terms: $f,g\in C^1(\mathbb R^2;\mathbb R)$ are such that
\begin{enumerate}[label=(\textbf{H\arabic*)}]
\item $f$ and $g$ are globally Lipschitz functions with constants $C_f$ y $C_g$, respectively. \label{H1}
\item $f(0,0)=0$ and $g(0,0)=0$. \label{H2}
\item  \label{H3}
\begin{enumerate}[label=(\roman*)] 
 \item There exists a constant ${a}_{21}^0>0$, such that  $\frac{\partial g}{\partial y}(0,0)\geq {a}_{21}^0$ or $-\frac{\partial g}{\partial y}(0,0)\geq {a}_{21}^0$, and
\item   $\frac{\partial g}{\partial  y}(\delta \bar y,\delta \bar z)$  does not change sign for all $\delta\in\cts{0,1}$ and $(\bar{y},\bar{z})\in\mathbb R^2$. 
\end{enumerate}
\end{enumerate}
Hypotheses \ref{H1} is related to the well-posedness of \eqref{eq:r-d_controlled-NL}. Indeed, under this assumption, for any $(y^0,z^0)\in [L^2(\Omega)]^2$ and $h\in L^2(0,T;L^2(\omega))$, system \eqref{eq:r-d_controlled-NL} has a unique solution in the class
\begin{equation*}%\label{eq:regularity}
    (y,z)\in L^2(0,T;[H^1(\Omega)]^2)\cup C([0,T];[L^2(\Omega)]^2).
\end{equation*}
On the other hand, \ref{H2} ensures that the state $(0,0)$ is an equilibrium point for \eqref{eq:r-d_controlled-NL} whenever $h\equiv 0$, while \ref{H3} will be useful later on during the implementation of some control techniques. 

\begin{rem}
In this remark, we provide a family of functions that satisfies assumptions \ref{H1}-\ref{H3}.  We consider a particular family of sigmoid functions given by
\begin{equation*}
    f^k(y,z)= \frac{y+z}{(1+\abs{y+z}^k)^{\frac{1}{k}}},
\end{equation*}
for a parameter $k>0$. After some straightforward computations, we can see that
\begin{equation}\label{Derivate-sigmoid}
\frac{\partial f^k}{\partial y}(y,z)=
    \frac{\partial f^k}{\partial z}(y,z)=\frac{1}{(1+\abs{y+z}^k)^{\frac{1}{k}+1}} \quad \text{for}\quad (y,z)\in \mathbb{R}^2,
\end{equation}
and $k>0$. Thus,
\begin{enumerate}
\item From \eqref{Derivate-sigmoid} we have that $0\leq\frac{\partial f^k}{\partial y}\leq 1$ and $0\leq\frac{\partial f^k}{\partial z}\leq 1$. This imply that $f^k$ is Lipschitz with constant $1$.
\item Clearly, $f^k(0,0)=0$.
\item Substituting $(y,z)=(0,0)$ in \eqref{Derivate-sigmoid} we have $\frac{\partial f^k}{\partial y}(0,0)=1$. On the other hand, we note that for $(\bar y,\bar z)\in \mathbb R^2$ fixed, $\frac{\partial f^k}{\partial  y}(\delta \bar y,\delta \bar z)$  does not change sign for all $\delta\in\cts{0,1}$.
\end{enumerate}
By properties $1$--$3$ we have that the family of function $f^k$ with $k>0$ satisfies the assumptions \ref{H1}--\ref{H3}.

This example shows that is possible to find functions that satisfy the conditions \ref{H1}--\ref{H3}. Another family of functions that satisfies these conditions is given by $g^k(y,z)=\arctan({k(y+z)})$ for $k>0$.
\end{rem}
One of the classical goals in control theory is to find a control $h$ such that a given system is steered to rest. In our case, this translates into the following. 
\begin{definition}\label{def:null_controllable}
System \eqref{eq:r-d_controlled-NL} is said to be null-controllable at time $T$ if for any $(y^0,z^0)\in [L^2(\Omega)]^2$, there exists a control $h\in L^2(0,T;L^2(\omega))$ such that the associated controlled states $(y,z)$ satisfy
\begin{equation}\label{eq:zero_state}
    y(T,\cdot)=z(T,\cdot)=0 \quad\text{in $\Omega$}.
\end{equation}
\end{definition}

Notice that in \eqref{eq:r-d_controlled-NL}, the action of the control $h$ enters directly onto the first equation of the system, but the second one only sees the action indirectly through the reaction term $g(y,z)$. In this direction, there is an extensive literature on the controllability of parabolic systems with less controls than equations, see e.g. the survey \cite{AKBGBdT11} for results up to 2011 and \cite{LB19} for a more recent one. 

In our particular case, it is well-known that for fixed $\sigma>0$ and assuming that \ref{H1}--\ref{H3} hold, system \eqref{eq:r-d_controlled-NL} is null-controllable\footnote{This fact seems to be well-known among the control-of-PDE community although the exact proof is difficult to find in the literature. The closest one is contained in the work \cite{AKBD06} but there hypothesis \ref{H1} is dropped and only a local-controllability result (i.e. for small initial data) is obtained. Here, we shall prove this claim (see \Cref{T1} below) by paying special attention to the dependency of $\sigma$ in the estimates. This introduces additional difficulties in the analysis but the main ideas are there for proving a (non-uniform) result for any $\sigma>0$.}, so it is natural to ask if this property is inherited by its shadow limit, namely, we wonder if there exists a control $h\in L^2(0,T;L^2(\omega))$ (by abuse of notation) such that the system
\begin{equation}\label{eq:shadow_controller}
    \begin{cases}
    y_t-\Delta y = f(y,\xi)+h\mathbf{1}_{\omega} &\text{in } Q_T, \\
    \D \dot{\xi}=\dashint_{\Omega} g(y(t,x),\xi(t))\dx &\text{in } (0,T), \\
    \D \frac{\partial y}{\partial \nu}=0 &\text{on } \Sigma_T, \\
    \D y(0,\cdot)=y^0 \quad\text{in } \Omega, \quad \xi(0)=\dashint_{\Omega}z^0 \dx.
    \end{cases}
\end{equation}
verifies
\begin{equation}\label{eq:contr_shadow}
    y(T,\cdot)=0 \; \textnormal{in $\Omega$} \quad\text{and}\quad \xi(T)=0.
\end{equation}

%At this point, there are several comments to make.
In the linear case, i.e., $f(y,z)=ay+bz$ and $g(y,z)=cy+dz$ with $a,b,c,d\in\mathbb R$ and $c\neq 0$, the controllability of \eqref{eq:shadow_controller} in the sense \eqref{eq:contr_shadow} has been proved in \cite[Theorem 2]{Zuazua-Victor}. In that work, the controllability of the linear shadow model was obtained by studying the observability of the corresponding adjoint system, namely
\begin{equation}\label{eq:adj_shadow}
    \begin{cases}
    -\varphi_t-\Delta \varphi = a\varphi+c\,\theta &\text{in } Q_T, \\
    \D -\dot{\theta}=b\,\dashint_{\Omega} \varphi(t,x)\dx+d\,\theta &\text{in } (0,T), \\
    \D \frac{\partial \varphi}{\partial \nu}=0 &\text{on } \Sigma_T, \\
    \D \varphi(T,\cdot)=\varphi^T \quad\text{in } \Omega, \quad \theta(T)=\theta^T,
    \end{cases}
\end{equation}
where $\varphi^T\in L^2(\Omega)$ and $\theta^T\in\mathbb R$.

The main ideas to prove such result are to use Carleman estimates for parabolic equations for the first equation of \eqref{eq:adj_shadow} and define $\zeta(t):=\dashint_{\Omega}\varphi(t,x)\dx$ for obtaining the following ode-ode system (thanks to the homogeneous Neumann boundary condition)
\begin{equation}\label{eq:red_red_shadow}
    \begin{cases}
        -\dot{\zeta}=a\zeta+c\theta &\textnormal{in }(0,T), \\
        -\dot{\theta}=b\zeta+d\theta &\textnormal{in }(0,T),
    \end{cases}
\end{equation}
for which Kalman rank condition holds as long as $c\neq 0$. All in all, this showed in the linear case the suitability of the shadow model reduction in the context of controllability. So a natural open question is to verify that such reduction is still meaningful in the semilinear case.

 In this direction, the main contribution of this paper is the following. 
\begin{theorem}\label{thm:main_1}
Assume that hypotheses \ref{H1}--\ref{H3} hold. There exists a control $h\in L^2(0,T;L^2(\omega))$ such that the corresponding solution $(y,\xi)$ to \eqref{eq:shadow_controller} satisfies \eqref{eq:contr_shadow}.
\end{theorem}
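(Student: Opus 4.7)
The plan is to obtain the shadow control by passing to the limit $\sigma\to\infty$ in a family of controls for the full reaction-diffusion system \eqref{eq:r-d_controlled-NL}, each bounded uniformly in $\sigma$. This requires three ingredients: a linearization of $f,g$ producing $L^\infty$ potentials, a uniform-in-$\sigma$ observability inequality for the two-component adjoint, and a shadow-limit passage.

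\textbf{Step 1 (Linearization).} Since $f,g$ are $C^1$ with $f(0,0)=g(0,0)=0$, one has $f(y,z)=A_{11}(y,z)y+A_{12}(y,z)z$ with $A_{1j}(y,z)=\int_0^1\partial_j f(\tau y,\tau z)\,d\tau$ (and similarly for $g$). Assumption \ref{H1} makes all $A_{ij}$ globally bounded, while \ref{H3} forces $A_{21}$ to have constant sign with $|A_{21}(0,0)|\geq a_{21}^0>0$, so the coupling between the two equations never degenerates at the equilibrium. This sign condition, transported to $a_{21}(t,x):=A_{21}(\bar y(t,x),\bar z(t,x))$ for any frozen pair $(\bar y,\bar z)$, is what will drive the observability.

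\textbf{Step 2 (Uniform controllability of the full system).} Following the idea alluded to in the footnote, I would first establish Theorem~\ref{T1}: for every $\sigma>0$, system \eqref{eq:r-d_controlled-NL} is null-controllable with $\|h_\sigma\|_{L^2}$ bounded uniformly in $\sigma$. The strategy is the classical linearization/fixed-point scheme. For a frozen pair $(\bar y,\bar z)\in L^2(Q_T)^2$, consider the adjoint of the linearized system with potentials $a_{ij}:=A_{ij}(\bar y,\bar z)$ and derive an observability inequality by combining a Carleman estimate for the equation in $\varphi$ (observed on $\omega$) with one for the $\sigma$-parabolic equation in $\theta$; the sign condition on $a_{21}$ allows the cross term $a_{21}\theta$ to transfer information from $\varphi$ back to $\theta$, so that both components are controlled by the local observation on $\omega$. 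Duality (HUM or a Fursikov--Imanuvilov penalization) then produces a control $h=h(\bar y,\bar z)$ bounded in $L^2$, uniformly in $\sigma$ and in the $L^\infty$-class of potentials inherited from \ref{H1}. A Schauder (or Kakutani) fixed-point on the map $(\bar y,\bar z)\mapsto(y,z)$ closes the nonlinear loop, the required compactness being supplied by parabolic regularity and Aubin--Lions.

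\textbf{Step 3 (Shadow limit).} With $(y_n,z_n,h_n)$ the controlled triples for $\sigma_n\to\infty$ obtained in Step~2, the uniform $L^2$-bound yields a weak limit $h_n\rightharpoonup h_\infty$ in $L^2((0,T)\times\omega)$. Integrating the second equation of \eqref{eq:r-d_controlled-NL} over $\Omega$ and combining it with the usual energy estimate gives $\|z_n-\dashint_\Omega z_n\|_{L^2(Q_T)}=O(\sigma_n^{-1/2})$, so $z_n$ converges to a spatially homogeneous function $\xi(t)$ that solves the ODE in \eqref{eq:shadow_controller}. Parabolic estimates give strong $L^2(Q_T)$ convergence of $y_n$, which is enough to pass to the limit in $f(y_n,z_n)$ and $g(y_n,z_n)$ through the Lipschitz bound \ref{H1}. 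Hence $(y,\xi,h_\infty)$ solves \eqref{eq:shadow_controller} and inherits the terminal conditions $y(T)=0$ and $\xi(T)=0$.

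The main obstacle is Step~2: a naive Carleman estimate for a parabolic equation with diffusion $\sigma$ naturally yields constants that deteriorate as $\sigma\to\infty$. One must therefore design Carleman weights (or combine two Carleman estimates with distinct parameters tuned to the two equations) in such a way that the large-diffusion terms coming from $\sigma\Delta\theta$ are absorbed on the left-hand side, while the constant multiplying the observation integral remains independent of $\sigma$. This is precisely where hypothesis~\ref{H3} is indispensable, since the sign-definite coupling coefficient $a_{21}$ is the lever that converts the observation of $\varphi$ on $\omega$ into a bound on $\theta$ without letting $\sigma$ reappear in the final constant.
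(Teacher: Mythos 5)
Your overall architecture coincides with the paper's: Taylor linearization producing $L^\infty$ potentials with the sign condition on $a_{21}$ coming from \ref{H3}, a uniform-in-$\sigma$ observability/Carleman estimate plus penalized HUM for the linearized system, a fixed-point argument giving \Cref{T1}, and a passage $\sigma\to\infty$ using the uniform bounds. The genuine gap is in your Step 3, precisely at the point the paper works hardest on: the terminal conditions, above all $\xi(T)=0$. Every convergence you establish --- $h_n\rightharpoonup h_\infty$ in $L^2$, $y_n\to y$ and $z_n\to\xi$ in $L^2(Q_T)$, $\|z_n-\dashint_\Omega z_n\|_{L^2(Q_T)}=O(\sigma_n^{-1/2})$ --- is a space-time convergence and controls nothing at the single time $t=T$, so ``inherits the terminal conditions'' is an assertion, not a consequence. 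This is exactly why the paper, after obtaining the same $L^2(Q_T)$ limits, devotes Steps 2--4 of \Cref{seccion-6} to a semigroup argument: writing $z^\sigma$ through the Neumann heat semigroup $e^{t\sigma\Delta}$, using its decay on mean-zero data (\Cref{A.2}) and Gronwall to obtain $\sup_{t\in[t_0,T]}\|z^\sigma(t,\cdot)-\xi(t)\|_{L^2(\Omega)}\to 0$, which is what lets $z^\sigma(T)=0$ pass to the limit.

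The gap is repairable inside your own compactness-based route, but you must supply the argument: for instance, the means $\zeta_n(t)=\dashint_\Omega z_n(t,\cdot)\dx$ satisfy $\dot\zeta_n=\dashint_\Omega g(y_n,z_n)\dx$ and, by \ref{H1}--\ref{H2} and the energy estimate \eqref{energy-estimate-sigma}, are bounded in $H^1(0,T)$ uniformly in $\sigma$; hence a subsequence converges weakly in $H^1(0,T)$ and therefore in $C([0,T])$, so $\zeta_n(T)=\dashint_\Omega z_n(T,\cdot)\dx=0$ passes to the limit and gives $\xi(T)=0$. Similarly, the uniform bound on $y_{n,t}$ in $L^2(0,T;(H^1(\Omega))')$ is needed to justify $y(T)=0$ from $y_n(T)=0$. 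Without some such control of the traces at $t=T$, the conclusion \eqref{eq:contr_shadow} is not proved. Your Step 2 is acceptable as a sketch, though be aware that the uniform observability is not a routine combination of two Carleman estimates: after exchanging the local term in $\psi$ for local terms in $\phi$ through the coupling coefficient, a term carrying $\sigma^2$ survives and has to be absorbed by a separate, $\sigma$-uniform energy estimate (Step 3 of \Cref{p1.2}); this is the technical core of the paper's uniform result.
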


As usual in other semilinear control problems, the methodology roughly consists in obtaining a linearization of the considered system, then studying the observability of the corresponding adjoint equation and finally use a fixed point method. However, as it has been pointed out in \cite[Section 6.1]{Zuazua-Victor}, linearizing direclty system \eqref{eq:shadow_controller} yields
\begin{equation}\label{eq:linearized_shadow}
    \begin{cases}
    y_t-\Delta y = a(t,x)y + b(t) \xi +h\mathbf{1}_{\omega} &\text{in } Q_T, \\
    \D \dot{\xi}=\dashint_{\Omega} c(t,x) y (t,x)\dx + d(t) \xi &\text{in } (0,T), \\
    \D \frac{\partial y}{\partial \nu}=0 &\text{on } \Sigma_T, \\
    \D y(0,\cdot)=y^0 \quad\text{in } \Omega, \quad \xi(0)=\dashint_{\Omega}z^0 \dx,
    \end{cases}
\end{equation}
for some coefficients $a,b,c,d$, and the dependency of $c$ on $x$ invalidates many ideas used for the linear constant-coefficient case. In particular, the shadow system \eqref{eq:linearized_shadow} cannot longer be reduced to \eqref{eq:red_red_shadow} which is crucial in the proof shown in \cite{Zuazua-Victor}.

To circumvent this, we shall use instead the approach of \cite{HSLB21} which uses as a starting point the controllability properties of the original system and then the convergence towards the shadow system. In more detail, the first step consists in proving the following uniform result. 
\begin{theorem}\label{T1}
Let $\sigma \geq 1$ and assume that \ref{H1}--\ref{H3} hold. There exists a control $h=h(\sigma)\in L^2(0,T;L^2(\omega))$ such that the corresponding solution $(y,z)$ to \eqref{eq:r-d_controlled-NL} satisfies \eqref{eq:zero_state}. Moreover, we have the following uniform bound on the control 
\begin{equation}\label{eq:unif_bound}
    \|h(\sigma)\|_{L^2(0,T;L^2(\omega))}\leq C \left(\|y^0\|_{L^2(\Omega)}+\|z^0\|_{L^2(\Omega)}\right),
\end{equation}
where $C>0$ is a constant independent of $\sigma$, $y^0$ and $z^0$. Moreover, the solution $(y,z)$ satisfies the uniform estimate
\begin{equation}\label{energy-estimate-sigma}
\begin{aligned}	&\nor{y}_{L^2\pts{0,T;H^1(\Omega)}}^2+\nor{z}_{L^2\pts{0,T;H^1(\Omega)}}^2\\
	&+\nor{y_t}_{L^2(0,T;(H^1(\Omega))')}^2+\nor{z_t}_{L^2(0,T;(H^1(\Omega))')}^2+ \sigma\iint_{Q_T}\abs{\nabla z}^2\,dx\,dt\\
	&\hspace{3cm}\leq C\pts{\nor{y^0}_{L^2(\Omega)}^2+\nor{z^0}_{L^2(\Omega)}^2}.
\end{aligned}
\end{equation}
for a constant $C>0$ depending at most on $\Omega$, $\omega$, $T$, $C_f$, and $C_g$.
\end{theorem}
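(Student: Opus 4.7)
\medskip
\noindent\textbf{Proof plan.} The argument follows the classical linearize--observe--fixed-point scheme, with every estimate tracked so as to be independent of $\sigma\geq 1$. By \ref{H2} and the fundamental theorem of calculus, for any given $(\hat y,\hat z)$ write
\[
 f(\hat y,\hat z)=A_{11}\hat y+A_{12}\hat z,\qquad g(\hat y,\hat z)=A_{21}\hat y+A_{22}\hat z,
\]
with $A_{11}(t,x)=\int_0^1\partial_y f(s\hat y,s\hat z)\,\dd s$ and analogous formulas for $A_{12},A_{21},A_{22}$. By \ref{H1} all four $A_{ij}$ are bounded in $L^\infty(Q_T)$ by $\max(C_f,C_g)$ uniformly in $(\hat y,\hat z)$, and \ref{H3} furthermore yields the quantitative lower bound $\abs{A_{21}(t,x)}\geq a_{21}^0>0$ a.e.\ on $Q_T$. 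Controlling \eqref{eq:r-d_controlled-NL} is thereby reduced to a $\sigma$-uniform null-controllability for the linearized cascade driven by these bounded potentials.

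The heart of the proof is a global Carleman inequality, uniform in $\sigma\geq 1$, for the adjoint of the linearized problem
\[
 -\varphi_t-\Delta\varphi=A_{11}\varphi+A_{21}\psi,\qquad -\psi_t-\sigma\Delta\psi=A_{12}\varphi+A_{22}\psi,
\]
with $\partial_\nu\varphi=\partial_\nu\psi=0$. I use Fursikov--Imanuvilov Neumann weights $\alpha(t,x),\rho(t,x)$ built from a cutoff supported in a subset $\omega_0$ compactly contained in $\omega$. The first equation is handled by the standard heat Carleman. For the second equation the weighted integration-by-parts must be performed so that every $\sigma$-dependent term ends up on the left-hand side with a non-negative power (principal contributions of the shape $s\sigma\lambda^2\iint\rho\abs{\nabla\psi}^2 e^{-2s\alpha}+s^3\sigma\lambda^4\iint\rho^3\abs\psi^2 e^{-2s\alpha}$), so that $\sigma\geq 1$ works in my favour rather than against it. Adding the two Carleman estimates and taking $s,\lambda$ large enough (using $\nor{A_{ij}}_\infty\leq\max(C_f,C_g)$) to absorb the coupling sources $A_{21}\psi$ and $A_{12}\varphi$ on the left, only two local terms remain. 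The local $\psi$-term is then traded for a higher-weight local $\varphi$-term by exploiting \ref{H3}: the first equation yields pointwise $\psi=A_{21}^{-1}(-\varphi_t-\Delta\varphi-A_{11}\varphi)$, and multiplying by a cutoff $\eta$ supported in $\omega$ followed by integrations by parts produces $\iint s^3\lambda^4\rho^3\eta\,\abs\psi^2 e^{-2s\alpha}\leq\tfrac12(\text{Carleman LHS})+C\iint_{(0,T)\times\omega} s^7\lambda^8\rho^7\abs\varphi^2 e^{-2s\alpha}$. A backward energy dissipation on $(0,T/2)$ then converts this weighted Carleman into the observability $\nor{\varphi(0)}_{L^2}^2+\nor{\psi(0)}_{L^2}^2\leq C\iint_{(0,T)\times\omega}\abs\varphi^2\,\dx\,\dt$, with $C=C(\Omega,\omega,T,C_f,C_g,a_{21}^0)$.

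Duality (HUM) then furnishes a control for the linearized problem satisfying \eqref{eq:unif_bound} with a $\sigma$-independent constant, and a Schauder fixed-point applied to $(\hat y,\hat z)\mapsto(y,z)$ in a bounded ball of $L^2(Q_T)^2$ --- compactness via Aubin--Lions through the uniform bound on $(y,z)$ in $L^2(0,T;H^1(\Omega))\cap H^1(0,T;(H^1(\Omega))')$ --- closes the nonlinear step. The energy estimate \eqref{energy-estimate-sigma} itself is obtained by multiplying the $y$- and $z$-equations by $y$ and $z$ respectively, integrating over $\Omega$, using the Neumann boundary conditions, and applying \ref{H1} together with Gr\"onwall's lemma; the $\sigma\iint\abs{\nabla z}^2$ term emerges naturally as the $z$-dissipation, while the $(H^1)'$ bounds on $y_t,z_t$ are read directly off the PDEs. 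The main obstacle is the uniform-in-$\sigma$ Carleman for the $\psi$-equation: a naive computation produces $\sigma$-factors on the right that degrade the bound as $\sigma\to\infty$, and the whole task is to arrange every integration by parts so that $\sigma\geq 1$ strengthens rather than weakens the estimate, keeping the final constant dependent only on the structural parameters $\Omega,\omega,T,C_f,C_g,a_{21}^0$.
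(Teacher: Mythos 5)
Your overall architecture coincides with the paper's: the same Taylor linearization with integral coefficients, a $\sigma$-uniform observability inequality for the adjoint system \eqref{eq:adjoint} via Carleman estimates, penalized HUM for the linearized problem, a Schauder-type fixed point, and classical energy estimates giving \eqref{energy-estimate-sigma}. The genuine gap sits exactly where uniformity in $\sigma$ is at stake. First, the Carleman inequality you postulate for the $\psi$-equation, with left-hand terms of the shape $s\sigma\lambda^2\iint\rho\abs{\nabla\psi}^2e^{-2s\alpha}+s^3\sigma\lambda^4\iint\rho^3\abs{\psi}^2e^{-2s\alpha}$ for the standard weights, is nonstandard and unsubstantiated; the scaling-consistent statement (divide the equation by $\sigma$ and apply \Cref{Lema-1} with $\hat\sigma=\sigma^{-1}\in(0,1]$, which is what the paper does) carries no positive power of $\sigma$ on the left. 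Second, and decisively, your trade of the local $\psi$-term for a local $\varphi$-term is written as if it only cost higher powers of $s\lambda$; but the integration by parts in time produces $\psi_t$, which must be replaced using the second adjoint equation, creating a localized term $\sigma\Delta\psi\,\phi$ (the term $K_2$ in \eqref{e1.18}). After Cauchy--Schwarz this leaves $\sigma^2s^{-2}\iint e^{-2s\hat\alpha}\hat\xi^{-2}\abs{\Delta\psi}^2$, which cannot be absorbed by the uniform Carleman left-hand side, since its Laplacian term carries no power of $\sigma$. This is precisely the obstruction you yourself call ``the whole task,'' yet your plan offers no mechanism for it. The paper resolves it by a separate argument (Step 3 in the proof of \Cref{p1.2}): multiplying the second equation of \eqref{eq:adjoint} by $e^{-2s\hat\alpha}\hat\xi^{-2}\Delta\psi$ and integrating by parts gives \eqref{e1.35}, a bound with a gain of $\sigma^{-1}$ on the right, so that after multiplication by $\sigma s^{-2}$ the problematic term is dominated by $\sigma$-free weighted integrals of $\phi$ and $\psi$ and absorbed. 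Without this (or an equivalent device), your observability constant degrades as $\sigma\to\infty$ and the uniform bound \eqref{eq:unif_bound} does not follow.

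A minor point: you assert that \ref{H3} yields $\abs{A_{21}}\geq a_{21}^0$ a.e.\ in all of $Q_T$; the paper only invokes (and only needs) such a lower bound on $(0,T)\times\omega$, which is where the local trade uses it, so you should not claim more than the hypothesis is meant to deliver.
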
 
At this point, we will follow a very conventional route, that is, we linearize system \eqref{eq:r-d_controlled-NL} (see eq. \eqref{eq:r-d_controlled} below) and prove a uniform  controllability result with respect to $\sigma$. Then, by means of a fixed point argument, we will obtain a uniform result for the nonlinear system. Here, it is important to mention that the Carleman estimate used in \cite{Zuazua-Victor} to address the linear case cannot be directly used and some improvements had to be implemented to obtain \Cref{T1} (see the discussion below \Cref{Desigualdad-obs}).

With \Cref{T1} at hand, the second step is to build a sequence of controls $\big(h(\sigma)\big)_{\sigma\geq 1}$ for system \eqref{eq:r-d_controlled-NL} and use the uniform bound \eqref{eq:unif_bound} to extract a subsequence such that
\begin{equation}\label{eq:conv_control}
    h(\sigma)\rightharpoonup h \quad\textnormal{weakly in $L^2(0,T;L^2(\omega))$ \ as \ $\sigma\to +\infty$},
\end{equation}
for some $h\in L^2(0,T;L^2(\omega))$. Then, we will use the uniform estimates \eqref{energy-estimate-sigma} and the convergence \eqref{eq:conv_control} to implement a shadow limit to reduce the controlled system \eqref{eq:r-d_controlled-NL} into \eqref{eq:shadow_controller} while preserving the null-controllability property. For that, we will make an adaptation of the shadow model reduction techniques shown in \cite{HR00} and \cite{MCHSKS18}, each of them relying on different tools \textemdash compactness and variational results for the first and semigroup theory for the second\textemdash but complementing each other allowing us to prove \Cref{thm:main_1}. 

\subsection{Organization of the paper}

The rest of the paper is organized as follows. In \Cref{seccion-3} we will focus on obtaining a uniform controllability result (w.r.t $\sigma$) for a linearized version of system \eqref{eq:r-d_controlled-NL}. This result will be used later in \Cref{seccion-5} to perform a fixed point method yielding, in particular, the proof of \Cref{T1}. Lastly, \Cref{seccion-6} is devoted to implementing the shadow limit and obtaining the proof of \Cref{thm:main_1}.

\section{Null controllability of a linear coupled system}\label{seccion-3}

\subsection{An observability inequality}
In this part of the paper, we study the null controllability of the linear reaction-diffusion system with Neumann boundary conditions
\begin{equation}\label{eq:r-d_controlled}
    \begin{cases}
    y_t-\Delta y = a_{11} y + a_{12} z +h\mathbf{1}_{\omega} &\textnormal{in } Q_T, \\
    z_t-\sigma\Delta z= a_{21} y + a_{22} z &\textnormal{in } Q_T, \\
    \D \frac{\partial y}{\partial \nu}=\frac{\partial z}{\partial \nu}=0 &\text{on } \Sigma_T \\
    y(0,\cdot)=y^0,\quad z(0,\cdot)=z^0  &\text{in }\ \Omega, 
    \end{cases}
\end{equation}
where $a_{ij}\in L^\infty(Q_T)$, $i,j=1,2$. This will be the first step towards the proof of \Cref{T1}.

First, we prove the following observability inequality for the adjoint system, given by 
\begin{equation}\label{eq:adjoint}
\begin{cases}
    -\phi_t-\Delta\phi=a_{11}\phi+a_{21}\psi,&\textnormal{in } Q_T, \\
    -\psi_t-\sigma\Delta\psi=a_{12}\phi+a_{22}\psi,&\textnormal{in } Q_T, \\
    \dfrac{\partial \phi}{\partial \nu}=\dfrac{\partial \psi}{\partial \nu}=0, &\textnormal{on } \Sigma_T\\
    \phi(T,\cdot)=\phi_T,\quad \psi(T,\cdot)=\psi_T &\textnormal{in } \Omega. 
\end{cases}
\end{equation}
The result reads as follows. 
\begin{proposition}\label{Desigualdad-obs} 
Let $\sigma\geq 1$ and assume that there is a constant $\hat{a}_{21}>0$ such that $a_{21}(t,x)>\hat{a}_{21}$ or $-a_{21}(t,x)>\hat{a}_{21}$ for all $(t,x)\in (0,T)\times\omega$. Then, for all $(\phi_T,\psi_T)\in [L^2(\Omega)]^2$, the solutions $(\phi,\psi)$ of the adjoint system \eqref{eq:adjoint} verify
\begin{equation}\label{e1.44}  
	\int_{\Omega}\pts{\abs{\phi(0,x)}^2+\abs{\psi(0,x)}^2}\dx\leq e^{CK}\iint\limits_{(0,T)\times\omega}\abs{\phi}^2\dx\dt,
\end{equation}
where
\begin{equation}\label{e1.45}
	K=1+T^{-1}+T\sum_{i,j=1}^2\nor{a_{ij}}_{L^{\infty}}+\max_{i,j=1,2}\nor{a_{ij}}_{L^{\infty}}^{\frac{2}{3}},
\end{equation}
and $C>0$ is a constant independent of $\sigma$ and $T$
\end{proposition}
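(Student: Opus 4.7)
The plan is to establish \eqref{e1.44} through the classical cascade-type Carleman strategy for coupled parabolic adjoints: derive global Carleman inequalities for both equations of \eqref{eq:adjoint} under the Neumann boundary conditions, combine them into a joint estimate with local observations of both components, use the first equation of the adjoint to remove the local $\psi$-observation, and close with a backward dissipation argument. All constants must be tracked uniformly in $\sigma\geq 1$, which is precisely the refinement over \cite{Zuazua-Victor} announced before the statement.

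\textbf{Steps 1 and 2 (Joint Carleman estimate).} Fix an auxiliary open set $\omega_0$ with $\overline{\omega_0}\subset\omega$ and consider the standard Fursikov--Imanuvilov weights $\varphi(t,x)=(e^{2\lambda\nor{\eta_0}_\infty}-e^{\lambda\eta_0(x)})/(t(T-t))$ and $\theta(t,x)=e^{\lambda\eta_0(x)}/(t(T-t))$ associated to a function $\eta_0\in C^2(\overline{\Omega})$ with $\abs{\nabla\eta_0}>0$ in $\overline{\Omega\setminus\omega_0}$ and $\partial_\nu\eta_0\le 0$ on $\partial\Omega$ (the latter to accommodate the Neumann condition). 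I would apply the classical Neumann Carleman estimate to the equations in $\phi$ and $\psi$ separately; for the $\psi$-equation, the principal part $-\psi_t-\sigma\Delta\psi$ produces a left-hand side whose diffusive contribution carries a factor $\sigma\geq 1$ and is therefore uniformly bounded below in $\sigma$. Adding the two inequalities and choosing $s\geq s_0\, K$ for a suitable $s_0=s_0(\Omega,\omega)$ with $K$ as in \eqref{e1.45} lets us absorb the zero-order cross terms $\iint e^{-2s\varphi}\abs{a_{11}\phi+a_{21}\psi}^2$ and $\iint e^{-2s\varphi}\abs{a_{12}\phi+a_{22}\psi}^2$ into the left-hand side; the specific threshold on $s$ is what produces the $T\sum\nor{a_{ij}}_\infty$ and the $\max\nor{a_{ij}}_\infty^{2/3}$ contributions in $K$. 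The outcome is a joint Carleman inequality of the form
$$\mathcal{I}(s,\lambda;\phi)+\mathcal{I}(s,\lambda;\psi)\leq C s^3\lambda^4 \iint_{(0,T)\times\omega_0} e^{-2s\varphi}\theta^3 \pts{\abs{\phi}^2+\abs{\psi}^2}\dx\dt,$$
where $\mathcal{I}(s,\lambda;u)$ stands for the standard weighted sum of $L^2$-norms of $u$, $\nabla u$, and its second derivatives.

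\textbf{Step 3 (Removing the $\psi$-observation).} Since $\abs{a_{21}}\geq \hat{a}_{21}>0$ on $(0,T)\times\omega$, the first equation of \eqref{eq:adjoint} gives $\psi=-\frac{1}{a_{21}}(\phi_t+\Delta\phi+a_{11}\phi)$ on $(0,T)\times\omega$. Introducing a cutoff $\chi\in C_c^\infty(\omega)$ with $\chi\equiv 1$ on $\omega_0$, I would multiply the local integral of $\abs{\psi}^2$ by $\chi^2 s^3\lambda^4\theta^3 e^{-2s\varphi}$, substitute the above expression for $\psi$, and integrate by parts in $x$ (to move $\Delta$ off $\phi$) and in $t$ (to move $\partial_t$ off $\phi$, with vanishing boundary contributions because $\theta^3 e^{-2s\varphi}\to 0$ at $t=0,T$). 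After Cauchy--Schwarz with a small parameter $\varepsilon$, all terms involving derivatives of $\phi$ or $\psi$ can be absorbed into $\mathcal{I}(s,\lambda;\phi)+\mathcal{I}(s,\lambda;\psi)$, leaving a Carleman inequality with a local integral only of $\abs{\phi}^2$ on $(0,T)\times\omega$, weighted by a sufficiently high power of $\theta$.

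\textbf{Step 4 and main obstacle.} Finally, restricting the left-hand side of the resulting Carleman inequality to $t\in[T/4,3T/4]$, where $e^{-2s\varphi}\theta^3$ is bounded below by a positive constant depending only on $s$, $\lambda$ and $T$, and using the backward dissipation estimate for \eqref{eq:adjoint} (uniform in $\sigma\geq 1$ because a larger $\sigma$ only strengthens the $\psi$-dissipation), one propagates the resulting lower bound down to $t=0$ and obtains \eqref{e1.44}. The main obstacle is Step 3 combined with the quantitative bookkeeping in Step 2: the exponents of $s$, $\lambda$, and $\theta$ used in the integration by parts must be chosen so that every weight-derivative remains under control, and none of the resulting constants may depend on $\sigma$. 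Recovering the explicit form of $K$ in \eqref{e1.45}, rather than a cruder dependence on $\nor{a_{ij}}_\infty$, is precisely the technical improvement over the constant-coefficient setting of \cite{Zuazua-Victor} mentioned just before the statement.
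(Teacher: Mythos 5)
Your overall architecture (separate Carleman estimates for the two adjoint equations, absorption of the coupling terms for $s$ large, elimination of the local $\psi$-observation through the first equation and the bound $\abs{a_{21}}\geq \hat a_{21}$ on $\omega$, then a dissipation/Gronwall argument propagating from $[T/4,3T/4]$ down to $t=0$) is the same as the paper's. However, there is a genuine gap exactly where the uniformity in $\sigma$ has to be earned. First, to get a Carleman estimate for the $\psi$-equation whose constants and $s$-threshold do not depend on $\sigma$, one must divide the equation by $\sigma$ and apply the estimate for $-\hat\sigma q_t-\Delta q$ with $\hat\sigma=\sigma^{-1}\leq 1$; the resulting left-hand side then controls only $s^{-1}\iint e^{-2s\alpha}\xi^{-1}\left(\sigma^{-2}\abs{\psi_t}^2+\abs{D^2\psi}^2\right)$ plus the gradient and zero-order terms, i.e. it carries no positive power of $\sigma$. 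Your claim that the diffusive part ``carries a factor $\sigma$ and is uniformly bounded below'' is not what the uniform estimate actually provides.

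Second, and more seriously, in your Step 3 the assertion that ``all terms involving derivatives of $\phi$ or $\psi$ can be absorbed into $\mathcal I(\phi)+\mathcal I(\psi)$'' fails with $\sigma$-independent constants. After writing the local $\psi$-integral as $-s^3\iint\chi e^{-2s\alpha}\xi^3\frac{\psi}{a_{21}}(\phi_t+\Delta\phi+a_{11}\phi)$ and integrating by parts in time, a term $s^3\iint\chi e^{-2s\alpha}\xi^3\psi_t\,\phi$ appears (note also that neither $\partial_t$ nor $\Delta$ may fall on $1/a_{21}$, which is only $L^\infty$; the paper first replaces $1/\abs{a_{21}}$ by $1/\hat a_{21}$ using the sign condition before integrating by parts). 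Since the uniform Carleman quantity controls only $\sigma^{-2}\abs{\psi_t}^2$, absorbing this term by Cauchy--Schwarz forces a factor $\sigma^{2}$ onto the local $\phi$-integral, destroying uniformity; substituting $-\psi_t=\sigma\Delta\psi+a_{12}\phi+a_{22}\psi$ instead produces $s^3\sigma\iint\chi e^{-2s\alpha}\xi^3\Delta\psi\,\phi$, and the Carleman control of $\abs{\Delta\psi}^2$ again has no compensating power of $\sigma$. The paper closes this hole with an additional step absent from your proposal: multiplying the second adjoint equation by $e^{-2s\hat\alpha}\hat\xi^{-2}\Delta\psi$ and integrating by parts gives the uniform weighted energy bound $\sigma^2 s^{-2}\iint e^{-2s\hat\alpha}\hat\xi^{-2}\abs{\Delta\psi}^2\leq Cs^2\iint e^{-2s\alpha}\xi^2\abs{\psi}^2+Cs\iint e^{-2s\alpha}\xi\left(\abs{\phi}^2+\abs{\psi}^2\right)$, whose right-hand side has low enough powers of $s$ to be absorbed by the Carleman left-hand side. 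Without this (or an equivalent device), your Step 3 does not go through uniformly in $\sigma$. A minor further slip: the term $T\sum_{i,j}\nor{a_{ij}}_{L^\infty}$ in $K$ comes from the final Gronwall propagation, not from the Carleman absorption threshold, which only requires $s\geq C\left(T+T^2+T^2\max_{i,j}\nor{a_{ij}}_{L^\infty}^{2/3}\right)$.
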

We postpone the proof of this result to the end of the section. For its proof, we improve a Carleman estimate that appears in \cite[Theorem 16]{Zuazua-Victor} in two ways: first, we allow that the coefficients $a_{ij}$, $i,j=1,2$, depend on the variables $t$ and $x$; secondly, we keep track of the dependency of the norms of $a_{ij}$ allowing us to obtain an explicit expression of the constant $K$. We shall note that this was not done even for the constant coefficient case in \cite{Zuazua-Victor}.

To prove \Cref{Desigualdad-obs}, we begin by recalling some instrumental definitions and results about Carleman estimates for heat equations with homogeneous Neumann boundary conditions. 

\begin{lem}[{\cite[Lemma 1.1]{FI96}}]
Let $\mathcal B\subset\subset \Omega$ be a non-empty open subset. Then, there exists $\eta^0\in C^2(\bar{\Omega})$ such that $\eta^0>0$ in $\Omega$, $\eta_0=0$ on $\partial\Omega$, and $\abs{\nabla\eta^0}>0$ in $\overline{\Omega\setminus\mathcal B}$.
\end{lem}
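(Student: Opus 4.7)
The plan is to construct $\eta^0$ in three stages, combining a function that handles the boundary behavior with a Morse-theoretic deformation that hides critical points inside $\mathcal{B}$. This is the classical Fursikov--Imanuvilov construction, and I would organize it to emphasize where the smoothness of $\partial\Omega$ and the openness of $\mathcal{B}$ are used.

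First, I would build a preliminary function $f_0\in C^2(\bar\Omega)$ with $f_0>0$ in $\Omega$, $f_0=0$ on $\partial\Omega$, and $\partial f_0/\partial\nu<0$ on $\partial\Omega$. Because $\partial\Omega$ is smooth, the signed distance $d(\cdot,\partial\Omega)$ is $C^2$ in a tubular neighborhood of $\partial\Omega$; gluing it via a smooth cut-off to any strictly positive smooth function on a compact subset of $\Omega$ produces such an $f_0$. The strict inequality on the boundary normal derivative ensures $|\nabla f_0|>0$ in some open neighborhood of $\partial\Omega$ inside $\bar\Omega$.

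Second, I would perturb $f_0$ to a Morse function $f$ that coincides with $f_0$ near $\partial\Omega$. By transversality/Sard-type density of Morse functions in $C^2$, one can find $f$ arbitrarily close to $f_0$ in $C^2(\bar\Omega)$ whose interior critical points are all non-degenerate; compactness of $\bar\Omega$ then forces the critical set to be finite, say $\{p_1,\dots,p_m\}\subset\Omega$. Taking the perturbation small relative to $\min_{\partial\Omega}|\partial f_0/\partial\nu|$ preserves the boundary sign condition, so $|\nabla f|>0$ in an open neighborhood of $\partial\Omega$ as well.

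Third, I would push these finitely many critical points into $\mathcal{B}$ by a diffeomorphism $\Phi:\bar\Omega\to\bar\Omega$ that fixes $\partial\Omega$ pointwise. Assuming $\Omega$ connected (and treating each component separately if not, which forces $\mathcal{B}$ to meet every component), each $p_i$ can be joined to a point $q_i\in\mathcal{B}$ by a smooth path contained in $\Omega$; for $N\ge 2$ these paths can be chosen pairwise disjoint and thickened to pairwise disjoint open tubes. A compactly supported smooth vector field $V$ on $\Omega$ that equals the unit tangent to the $i$-th path on a neighborhood of that path, suitably cut off, has a time-$1$ flow $\Phi$ which is a diffeomorphism of $\bar\Omega$, equal to the identity near $\partial\Omega$, with $\Phi(p_i)=q_i\in\mathcal{B}$. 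Setting $\eta^0:=f\circ\Phi^{-1}$ then yields an element of $C^2(\bar\Omega)$ with $\eta^0=0$ on $\partial\Omega$, $\eta^0>0$ in $\Omega$, and critical set $\{q_1,\dots,q_m\}\subset\mathcal{B}$, so that $|\nabla\eta^0|>0$ on $\overline{\Omega\setminus\mathcal{B}}$.

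The main obstacle is the third step: simultaneously moving all critical points into $\mathcal{B}$ while keeping $\Phi$ a boundary-fixing $C^2$-diffeomorphism. The cleanest tool is the isotopy-extension theorem applied to a smooth isotopy of $\{p_1,\dots,p_m\}$ into $\mathcal{B}$ through embeddings into $\Omega$; the flow construction above is the explicit realization. The one-dimensional case $N=1$ requires a separate elementary argument, since paths cannot be disjointly thickened, but there $\Omega$ is a finite union of intervals and $\eta^0$ can be written down by hand as a smooth concave bump vanishing at the endpoints with its single critical point placed inside $\mathcal{B}$.
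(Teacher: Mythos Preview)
Your outline is correct and is precisely the classical Fursikov--Imanuvilov construction from \cite{FI96}. Note, however, that the paper under review does not actually prove this lemma: it is simply quoted from \cite[Lemma~1.1]{FI96} and used as a black box to define the Carleman weights. So there is no ``paper's own proof'' to compare against---your three-step argument (boundary-adapted $f_0$, Morse perturbation, isotopy pushing the finite critical set into $\mathcal{B}$) is exactly the proof one finds in the original reference, and it is sound as written.
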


 For a parameter $\lambda>0$, we define 
\begin{equation}\label{e1.2}
\alpha(t,x):=\frac{e^{2\lambda\nor{\eta^0}_{\infty}}-e^{\lambda\eta^0(x)}}{t(T-t)}, \quad \xi(t,x):=\frac{e^{\lambda\eta^0(x)}}{t(T-t)}, 
\end{equation}  
\begin{equation}\label{e1.3}
\begin{aligned}
	\hat\alpha(t)&:=\max_{x\in\bar{\Omega}}\alpha(t,x), & \xi^*(t):=\max_{x\in \bar{\Omega}}\xi(t,x),\\
	\alpha^*(t)&:=\min_{x\in\bar{\Omega}}\alpha(t,x), & \hat\xi(t):=\min_{x\in \bar{\Omega}}\xi(t,x).
\end{aligned}
\end{equation}

The Carleman estimate for heat equations with homogeneous boundary conditions we will use reads as follows. 

\begin{lem}\label{Lema-1}
There exists $C=C(\Omega,\omega)$ and $\lambda_0=\lambda_0(\Omega,\omega)$ such that, for every $\lambda\geq\lambda_0$ there exists $s_0=s_0(\Omega,\omega, \lambda)$ such that the solution $q$ to
\begin{equation*}%\label{e1.5}
\begin{cases}
	-\hat\sigma q_t- \Delta q=g(t,x), & \text{in}  \quad Q_T,\\
	\D \frac{\partial q}{\partial \nu}=0, & \text{on}  \quad  \Sigma_T,\\
 	q(x,T)=q_T(x), & \text{in} \quad \Omega,
\end{cases}
\end{equation*}
satisfies for any $s\geq s_0(T+T^2)$, $q_T\in L^2(\Omega)$, and $g\in L^2(Q_T)$, the estimate
\begin{equation}\label{e.12}
	I(s,\hat\sigma;q)\leq C\pts{\iint_{Q_T} e^{-2s\alpha}\abs{g}^2 \dx\dt+s^3\iint\limits_{(0,T)\times\mathcal B} e^{-2s\alpha}\xi^3\abs{q}^2\dx\dt},
\end{equation}
where 
\begin{align*}
I(s,\hat{\sigma};q)&:=s^{-1}\iint_{Q_T}e^{-2s\alpha}\xi^{-1}\left(\hat{\sigma}^2|q_t|^2+\sum_{i,j=1}^{N}\left|\frac{\partial^2 q}{\partial x_{i}x_{j}}\right|^2\right)\dx\dt \\
&\quad +s\iint_{Q_T}e^{-2s\alpha}\xi|\nabla q|^2\dx\dt+s^3\iint_{Q_T}e^{-2s\alpha}\xi^3|q|^2\dx\dt.
\end{align*}
\end{lem}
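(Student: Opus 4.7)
My plan is to adapt the classical Fursikov--Imanuvilov Carleman argument for the Neumann heat equation, inserting the parameter $\hat\sigma$ into the conjugation and tracking it through every step, so that the final constant $C$ and the threshold $s_0$ remain $\hat\sigma$-independent. The $\hat\sigma=1$ case with homogeneous Neumann boundary data is precisely the version proved in \cite{FI96}; the only thing to check is that putting $\hat\sigma$ in front of $q_t$ does not destroy any absorption step.

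Step 1 (conjugation and splitting). Set $P:=e^{-s\alpha}q$. Using the identities $\nabla\alpha=-\lambda\xi\nabla\eta^0$, $|\nabla\alpha|^2=\lambda^2\xi^2|\nabla\eta^0|^2$, and $\Delta\alpha=-\lambda^2\xi|\nabla\eta^0|^2-\lambda\xi\Delta\eta^0$, the equation $-\hat\sigma q_t-\Delta q=g$ rewrites as $M_1 P+M_2 P = e^{-s\alpha}g + R P$, where
\begin{equation*}
M_1 P := \hat\sigma P_t + 2s\nabla\alpha\cdot\nabla P, \quad M_2 P := \Delta P + s^2|\nabla\alpha|^2 P + s\hat\sigma\alpha_t P, \quad R P := s\Delta\alpha\,P.
\end{equation*}
The parameter $\hat\sigma$ is localized in the $P_t$-piece of $M_1$ and the $s\alpha_t$-piece of $M_2$, while the dominant terms $s^2|\nabla\alpha|^2 P$ and $2s\nabla\alpha\cdot\nabla P$ are $\hat\sigma$-free.

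Step 2 (fundamental identity and cross-product analysis). Squaring and integrating one gets the identity $\|M_1 P\|^2+\|M_2 P\|^2+2\langle M_1 P, M_2 P\rangle=\|e^{-s\alpha}g+RP\|^2$ in $L^2(Q_T)$. Expanding the cross product into its six pieces and integrating by parts — in $x$, using $\partial_\nu P=-s(\partial_\nu\alpha)P$ together with $\partial_\nu\eta^0\leq 0$ on $\partial\Omega$ so that boundary terms have the right sign, and in $t$, with $e^{-s\alpha}\to 0$ as $t\to 0^+,T^-$ killing temporal boundary contributions — the two purely spatial brackets $\langle 2s\nabla\alpha\cdot\nabla P,\,s^2|\nabla\alpha|^2 P\rangle$ and $\langle 2s\nabla\alpha\cdot\nabla P,\,\Delta P\rangle$ yield the principal $\hat\sigma$-free positive contributions $c_1 s^3\lambda^4\iint\xi^3|\nabla\eta^0|^4 P^2$ and $c_2 s\lambda^2\iint\xi|\nabla\eta^0|^2|\nabla P|^2$. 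The remaining four cross terms carry $\hat\sigma$ or $\hat\sigma^2$ factors: e.g.\ $\langle \hat\sigma P_t,\,s^2|\nabla\alpha|^2 P\rangle = -\tfrac{\hat\sigma s^2}{2}\iint \partial_t|\nabla\alpha|^2\,P^2$ and $\langle \hat\sigma P_t,\,s\hat\sigma\alpha_t P\rangle = -\tfrac{\hat\sigma^2 s}{2}\iint\alpha_{tt}\,P^2$. The standard weight bounds $|\partial_t|\nabla\alpha|^2|\leq CT\lambda^2\xi^3$ and $|\alpha_{tt}|\leq CT^2\xi^3$ dominate these by $C\hat\sigma s^2 T\lambda^2\iint\xi^3 P^2$ and $C\hat\sigma^2 s T^2\iint\xi^3 P^2$, respectively. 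Since $\hat\sigma$ is bounded above in the intended applications ($\hat\sigma=1$ for the $\phi$-equation of \eqref{eq:adjoint} and $\hat\sigma=1/\sigma\leq 1$ for the $\psi$-equation once divided by $\sigma$), both are absorbed by the $c_1 s^3\lambda^4$-term as soon as $s\geq s_0(T+T^2)$ with $s_0=s_0(\Omega,\omega,\lambda)$ large, with no $\hat\sigma$-dependence leaking into $s_0$ or $C$.

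Step 3 (localization and recovery of $I(s,\hat\sigma;q)$). The standard cutoff argument of \cite{FI96} (on $\mathcal B'\supset\supset\mathcal B$ exploiting $|\nabla\eta^0|>0$ on $\overline{\Omega\setminus\mathcal B}$) replaces the $|\nabla\eta^0|^4$ factor by $1$ at the cost of a local observation term $s^3\iint_{(0,T)\times\mathcal B}e^{-2s\alpha}\xi^3 q^2$ on the right-hand side. Converting back from $P$ to $q$ via $\nabla q=e^{s\alpha}(s\nabla\alpha P+\nabla P)$ and the analogous Hessian formula produces the $q$, $\nabla q$ and $D^2 q$ pieces of $I(s,\hat\sigma;q)$. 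For the time-derivative piece, the equation itself gives $\hat\sigma^2|q_t|^2=(\Delta q+g)^2\leq 2|\Delta q|^2+2|g|^2$, so
\begin{equation*}
s^{-1}\iint_{Q_T}e^{-2s\alpha}\xi^{-1}\hat\sigma^2|q_t|^2\,\dx\dt \leq 2s^{-1}\iint_{Q_T}e^{-2s\alpha}\xi^{-1}|\Delta q|^2\,\dx\dt + 2s^{-1}\iint_{Q_T}e^{-2s\alpha}\xi^{-1}|g|^2\,\dx\dt,
\end{equation*}
and both terms on the right are under control: the first by the Hessian piece of $I$ already bounded, the second by $C\iint e^{-2s\alpha}|g|^2$ since $s^{-1}\xi^{-1}$ is bounded (uniformly in $\hat\sigma$). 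The main obstacle lies in Step 2: verifying that each $\hat\sigma$-mixed cross term is genuinely subordinate to an $\hat\sigma$-free principal good term with no hidden $\hat\sigma$-dependence leaking into the constants. This absorption is the precise technical refinement over the constant-coefficient Carleman used in \cite{Zuazua-Victor}, and relies crucially on the boundedness of $\hat\sigma$ in the applications together with the positive sign of $\alpha_{tt}$ coming from the convexity of $1/(t(T-t))$.
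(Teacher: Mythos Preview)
The paper does not prove this lemma at all: it is simply quoted as the standard Fursikov--Imanuvilov Carleman estimate (the citation \cite{FI96} appearing just above, for the auxiliary function $\eta^0$, is the actual source), with the parameter $\hat\sigma$ inserted in front of $q_t$. So there is no ``paper's own proof'' to compare against; your sketch is a reconstruction of the classical argument, and it is correct in outline.

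A few comments are nevertheless worth making. First, you are right that the absorption of the $\hat\sigma$-mixed cross terms (the ones bounded by $C\hat\sigma s^2 T\lambda^2\iint\xi^3 P^2$ and $C\hat\sigma^2 s T^2\iint\xi^3 P^2$) into the principal $s^3\lambda^4$-term requires $\hat\sigma$ to be bounded above; otherwise $s_0$ would have to grow with $\hat\sigma$. The lemma as stated in the paper silently omits this hypothesis, but it is harmless because the only applications (in the proof of \Cref{p1.2}) use $\hat\sigma=1$ and $\hat\sigma=\sigma^{-1}\leq 1$. You identified this correctly. Second, your Step~3 device of recovering the $\hat\sigma^2|q_t|^2$ piece directly from the equation, via $\hat\sigma^2|q_t|^2=|\Delta q+g|^2\leq 2|\Delta q|^2+2|g|^2$, is a clean shortcut that avoids tracking $\hat\sigma$ through $\|M_1 P\|^2$; either route works. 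Third, two minor overstatements: the sign of $\alpha_{tt}$ is not actually ``crucial'' --- what matters for absorption is only the size bound $|\alpha_{tt}|\leq CT^2\xi^3$, not the sign --- and your closing remark about this being ``the precise technical refinement over the constant-coefficient Carleman used in \cite{Zuazua-Victor}'' is misplaced: the refinement over \cite{Zuazua-Victor} in this paper occurs at the level of the \emph{coupled} estimate (\Cref{p1.2}), not at the level of the single-equation lemma you are proving here.
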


Now, we are in position to present one of the main results of this section. 
\begin{proposition}\label{p1.2}
Under the conditions of the Proposition \ref{Desigualdad-obs}, there is a constant $C>0$ depending at most on $\Omega$, $\omega$ and $\hat{a}_{21}$, such that for any $(\phi_T,\psi_T)\in [L^2(\Omega)]^2$, the solutions $(\phi,\psi)$ of the adjoint \eqref{eq:adjoint} verify
\begin{equation*}%\label{e1.8}  
\begin{aligned}
	s^3\iint_{Q_T} e^{-2s\alpha}\xi^3\abs{\phi}^2\dx\dt&+s^3\iint_{Q_T} e^{-2s\alpha}\xi^3\abs{\psi}^2\dx\dt \\ &\leq Cs^8\iint_{(0,T)\times\omega}e^{-4s\alpha^*+2s\hat\alpha}\pts{\xi^*}^8\abs{\phi}^2\dx\dt,
\end{aligned}
\end{equation*}
for any 
\begin{equation*}
	s\geq C\pts{T+T^2+T^2\max_{i,j=1,2}\nor{a_{ij}}_{L^\infty}^{\frac{2}{3}}}.
\end{equation*}
\end{proposition}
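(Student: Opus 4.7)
The plan is to apply Lemma \ref{Lema-1} separately to the two equations of \eqref{eq:adjoint} and then eliminate the local observation of $\psi$ by using the first equation. First, fix an auxiliary open set $\omega_0$ with $\omega_0\subset\subset\omega$ and build the weights $\alpha,\xi$ from a function $\eta^0$ associated with $\mathcal B=\omega_0$. Applying Lemma \ref{Lema-1} to the first equation (with $\hat\sigma=1$) with source $a_{11}\phi+a_{21}\psi$, and to the second one (with $\hat\sigma=\sigma$) with source $a_{12}\phi+a_{22}\psi$, and adding, one obtains
\begin{equation*}
I(s,1;\phi)+I(s,\sigma;\psi)\leq C\iint_{Q_T} e^{-2s\alpha}\Bigl(\sum_{i,j}|a_{ij}|^2\Bigr)\bigl(|\phi|^2+|\psi|^2\bigr)\,\dx\dt+Cs^3\iint_{(0,T)\times\omega_0}e^{-2s\alpha}\xi^3\bigl(|\phi|^2+|\psi|^2\bigr)\,\dx\dt.
\end{equation*}
The first term on the right can be absorbed into the leading $s^3\xi^3$-terms on the left provided $s\geq C\,T^2\max_{i,j}\|a_{ij}\|_\infty^{2/3}$, since $\xi\gtrsim 1/T^2$. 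This fixes the lower bound on $s$ stated in the proposition.

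The crux of the proof is estimating the local term in $\psi$ by a local term in $\phi$ alone. Because $|a_{21}|\geq \hat a_{21}$ on $(0,T)\times\omega$, the first equation gives
\begin{equation*}
\psi=\frac{1}{a_{21}}\bigl(-\phi_t-\Delta\phi-a_{11}\phi\bigr)\quad\text{in }(0,T)\times\omega.
\end{equation*}
I will introduce a cutoff $\chi\in C^\infty_c(\omega)$ with $\chi\equiv 1$ on $\omega_0$ and a time-space weight of the form $\theta(t,x):=\chi(x)\,s^{m}(\xi^*)^{m}e^{-2s\alpha^*}$ for an exponent $m$ to be adjusted. Then
\begin{equation*}
s^3\iint_{(0,T)\times\omega_0}e^{-2s\alpha}\xi^3|\psi|^2\,\dx\dt\leq \frac{1}{\hat a_{21}}\iint_{(0,T)\times\omega}\theta\,\psi\,\bigl(-\phi_t-\Delta\phi-a_{11}\phi\bigr)\,\dx\dt,
\end{equation*}
provided $m$ is chosen so that $s^3(\xi^*)^3e^{-2s\alpha}\leq \theta$ on $\omega_0$ (which requires $m\geq 3$ up to a multiplicative constant). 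Integrating by parts in $t$ and twice in $x$ transfers all derivatives onto the smooth factor $\theta$ and onto $\psi$ itself; the boundary terms vanish thanks to $\chi$ and to the homogeneous Neumann condition on $\partial\Omega$.

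After the integrations by parts one is left with finitely many integrals of the schematic form $\iint_{(0,T)\times\omega}(\partial^{\beta}\theta)\,(\partial^{\gamma}\psi)\,\phi\,\dx\dt$, where $\beta,\gamma$ are multi-indices with $|\beta|+|\gamma|\leq 2$ in space and $\leq 1$ in time. Splitting each such term by Young's inequality with a parameter $\varepsilon>0$, I will put all $\psi$-factors (including $\sigma\psi_t$, $\nabla\psi$ and $\psi$) against the Carleman left-hand side $I(s,\sigma;\psi)$, using the fact that $\sigma\geq 1$ and that $I(s,\sigma;\psi)$ controls $\sigma^2 s^{-1}\xi^{-1}e^{-2s\alpha}|\psi_t|^2$, $s\xi e^{-2s\alpha}|\nabla\psi|^2$, and $s^3\xi^3 e^{-2s\alpha}|\psi|^2$ (with constants independent of $\sigma$). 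The remaining $\phi$-factors then carry a weight of the form $e^{-2s\alpha^*}\cdot e^{-2s\alpha^*}\cdot e^{+2s\hat\alpha}=e^{-4s\alpha^*+2s\hat\alpha}$, the additional $e^{+2s\hat\alpha}$ coming from the relation $e^{-2s\alpha}\geq e^{-2s\hat\alpha}$ needed to match the Carleman weight used on the $\psi$-side. A careful accounting of the derivatives of $\alpha^*$ and $\xi^*$ in $\partial^{\beta}\theta$ yields the loss of five extra powers of $s\xi^*$, giving the announced exponent $s^8(\xi^*)^8$. Choosing $\varepsilon$ small enough, these $\psi$-terms are absorbed into the left-hand side, while all the $\phi$-contributions merge with the local Carleman term $s^3\iint_{\omega_0}e^{-2s\alpha}\xi^3|\phi|^2\,\dx\dt$ into the single term $s^8\iint_{(0,T)\times\omega}e^{-4s\alpha^*+2s\hat\alpha}(\xi^*)^8|\phi|^2\,\dx\dt$.

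The main obstacle I anticipate is the bookkeeping in the last step: one must simultaneously (i) keep the constants independent of $\sigma\geq 1$, (ii) track the powers of $s$ and of $\xi^*$ produced by differentiating the weight $\theta$ (each $\partial_t$ or $\Delta$ costs up to two powers of $s\xi^*$), and (iii) match the exponential weights on both sides. Once the exponent $m=3$ is raised to $m=8$ through this process and the condition $s\geq C(T+T^2+T^2\max\|a_{ij}\|_\infty^{2/3})$ is imposed both for absorbing the lower-order terms and for ensuring $s^{-k}\xi^{-k}\leq 1$ in the intermediate estimates, the Proposition follows.
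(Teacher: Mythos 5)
Your overall architecture (Carleman estimate for each equation, then elimination of the local $\psi$-term through the first equation and the lower bound $|a_{21}|\geq\hat a_{21}$ on $(0,T)\times\omega$, with the final weight $e^{-4s\alpha^*+2s\hat\alpha}(\xi^*)^8$) matches the paper, but there is a genuine gap at the single point where uniformity in $\sigma$ is actually at stake. Lemma~\ref{Lema-1} is stated for the operator $-\hat\sigma q_t-\Delta q$, so for the second adjoint equation $-\psi_t-\sigma\Delta\psi=a_{12}\phi+a_{22}\psi$ the only admissible application is after dividing by $\sigma$, i.e.\ with $\hat\sigma=\sigma^{-1}$ and source $\sigma^{-1}(a_{12}\phi+a_{22}\psi)$. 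The resulting functional $I(s,\sigma^{-1};\psi)$ controls $s^{-1}\xi^{-1}e^{-2s\alpha}\,\sigma^{-2}|\psi_t|^2$, not $\sigma^{2}|\psi_t|^2$. Your proposal invokes ``$I(s,\sigma;\psi)$'' controlling $\sigma^2 s^{-1}\xi^{-1}e^{-2s\alpha}|\psi_t|^2$ with constants independent of $\sigma$; that estimate does not follow from Lemma~\ref{Lema-1} (the equation $-\sigma\psi_t-\Delta\psi=g$ is not the one $\psi$ satisfies), and it is precisely the kind of uniform gain in $\sigma$ whose absence makes this proposition nontrivial. Concretely, after you integrate by parts in time in the local term, the contribution $\iint\eta\,e^{-2s\alpha}\xi^3\,\psi_t\,\phi$ cannot be split by Young and absorbed: with the correct Carleman functional you would need the $\psi_t$-factor weighted by $\sigma^{-2}$, and the compensating $\phi$-term then carries a factor $\sigma^{2}$, so the final constant would depend on $\sigma$ and the conclusion (a $\sigma$-independent $C$) is lost.

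The paper resolves exactly this obstruction with an extra step you do not have: it substitutes the second equation, $\psi_t=-\sigma\Delta\psi-a_{12}\phi-a_{22}\psi$, so the dangerous term becomes $\frac{s^3}{\hat a_{21}}\iint\eta\,e^{-2s\alpha}\xi^3\,\sigma\Delta\psi\,\phi$, and then proves a separate weighted energy estimate (Step~3 of the paper's proof): multiplying the $\psi$-equation by $e^{-2s\hat\alpha}\hat\xi^{-2}\Delta\psi$ and integrating by parts gives a bound on $\sigma\iint e^{-2s\hat\alpha}\hat\xi^{-2}|\Delta\psi|^2$, hence on $\sigma^{2}s^{-2}\iint e^{-2s\hat\alpha}\hat\xi^{-2}|\Delta\psi|^2$, by global terms of the form $s^2\xi^2e^{-2s\alpha}|\psi|^2$, $s\xi e^{-2s\alpha}|\phi|^2$, $s\xi e^{-2s\alpha}|\psi|^2$, which are then absorbed by the left-hand side of the Carleman estimate. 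Without this auxiliary uniform estimate (or some genuine substitute for it), the bookkeeping you describe cannot close with a constant independent of $\sigma$, so the proof as proposed does not establish the proposition. The remaining ingredients of your plan (choice of $s\geq C(T+T^2+T^2\max_{i,j}\|a_{ij}\|_\infty^{2/3})$ to absorb the zero-order terms via $\xi^{-1}\leq T^2/4$, the cutoff supported in $\omega$, and the merging of all local $\phi$-terms into $s^8(\xi^*)^8e^{-4s\alpha^*+2s\hat\alpha}$) are consistent with the paper.
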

\begin{proof}
Let us consider sets $\omega_i\subset \Omega$, $i=0,1$ such that 
\begin{equation}\label{e1.4}
	\omega_0\subset\subset\omega_1\subset\subset\omega.
\end{equation}
All along the proof, $C$ will be a generic positive constant that depends at most on $\Omega$, $\omega$ and $\hat{a}_{21}$ and that may change from line to line. For readability, we have divided the proof in four steps.

\textbf{Step 1: initial estimates.} We fix $\lambda$ to a value large enough and apply estimate \eqref{e.12} to the first equation of \eqref{eq:adjoint} with $\hat\sigma=1$, $\mathcal B=\omega_0$, and $g(t,x)=a_{11}(t,x)\phi+a_{21}(t,x)\psi$ to get
\begin{equation}\label{e1.9}
\begin{aligned}
	I(s,1;\phi) 
	\leq C\iint_{Q_T} e^{-2s\alpha}\abs{a_{11}(t,x)\phi+a_{21}(t,x)\psi}^2\dx\dt \\
	+Cs^3\iint\limits_{(0,T)\times\omega_0} e^{-2s\alpha}\xi^3\abs{\phi}^2\dx\dt.
\end{aligned}
\end{equation}
Similarly, we divide over $\sigma$ the second equation of \eqref{eq:adjoint} and once again apply estimate  \eqref{e.12} with $\hat\sigma=\sigma^{-1}$, $\mathcal B=\omega_0$, and $g(t,x)=\sigma^{-1}\pts{a_{12}(t,x)\phi+a_{22}(t,x)\psi}$
\begin{equation}\label{e1.10}
\begin{aligned}
	I(s,\sigma^{-1};\psi) 
	\leq C\iint_{Q_T} e^{-2s\alpha}\abs{\sigma^{-1}\pts{a_{12}(t,x)\phi+a_{22}(t,x)\psi}}^2\dx\dt \\
	+Cs^3\iint\limits_{(0,T)\times\omega_0} e^{-2s\alpha}\xi^3\abs{\psi}^2\dx\dt.
\end{aligned}
\end{equation}
Now, adding the inequalities \eqref{e1.9} and \eqref{e1.10}, using that $\sigma\geq 1$, the right-hand side can be estimated as follows
\begin{equation*}%\label{e1.11}
\begin{aligned}
	I(s,1;\phi)&+I(s,\sigma^{-1};\psi) \\
	\leq& C\iint_{Q_T} e^{-2s\alpha}\pts{\nor{a_{11}}_{L^\infty}^2 +\nor{a_{12}}_{L^\infty}^2}\abs{\phi}^2\dx\dt\\
	& \hspace{0.67cm}+C\iint_{Q_T} e^{-2s\alpha}\pts{\nor{a_{21}}_{L^\infty}^2 +\nor{a_{22}}_{L^\infty}^2}\abs{\psi}^2\dx\dt\\
	&\hspace{1.95cm}+Cs^3\iint\limits_{(0,T)\times\omega}e^{-2s\alpha}\xi^3\pts{\abs{\phi}^2+\abs{\psi}^2}\dx\dt. 
\end{aligned}
\end{equation*}
Note that $\xi^{-1}(t,x)\leq\frac{T^2}{4}$. This implies  
\begin{equation}\label{e1.12}
\begin{aligned}
	I(s,1;\phi)+&I(s,\sigma^{-1};\psi)\\
	&\leq C\pts{T^2\max_{i,j=1,2}\nor{a_{ij}}_{L^\infty}^{\frac{2}{3}}}^3\iint_{Q_T} e^{-2s\alpha}\xi^3\pts{\abs{\phi}^2+\abs{\psi}^2}\dx\dt \\
	&\hspace{1.7cm}+Cs^3\iint\limits_{(0,T)\times\omega}e^{-2s\alpha}\xi^3\pts{\abs{\phi}^2+\abs{\psi}^2}\dx\dt. 
\end{aligned}
\end{equation} 
Under the assumptions of Lemma~\ref{Lema-1}, we have that $s\geq C\pts{T+T^2}$ for some $C>0$ depending on $\Omega$ and $\omega$. If we choose 
\begin{equation}\label{e1.13}
	s\geq C\pts{T+T^2+T^2\max_{i,j=1,2}\nor{a_{ij}}_{L^\infty}^{\frac{2}{3}}},
\end{equation}
for a (possibly larger) constant $C>0$ independent of $a_{ij}$ and $T$, we can absorb the first term in the right-hand side of \eqref{e1.12}, that is
\begin{equation}\label{e1.14}
	I(s,1;\phi)+I(s,\sigma^{-1};\psi) 	
	\leq Cs^3\iint\limits_{(0,T)\times\omega_0}e^{-2s\alpha}\xi^3\pts{\abs{\phi}^2+\abs{\psi}^2}\dx\dt. 
\end{equation}

\textbf{Step 2: local energy estimates.} In this step we estimate the local integral term of $\psi$ in the right-hand side of \eqref{e1.14}. Let $\eta\in C_0^{\infty}(\omega_1)$ be such that
\begin{equation}\label{e1.15}
\begin{aligned}
	&0\leq\eta\leq 1, \quad\text{ in } \omega_1,  \\
	&\eta(x)=1, \quad \text{for all}\quad x\in\omega_0, 
\end{aligned}
\end{equation}
and assume (without loss of generality) that $-a_{21}\geq \hat{a}_{21}>0$ in $(0,T)\times \omega$. Using the first equation of \eqref{eq:adjoint}, we obtain
\begin{equation}\label{e1.16}
\begin{aligned}
	s^3\iint\limits_{(0,T)\times \omega_0}&e^{-2s\alpha}\xi^3\abs{\psi}^2\dx\dt \leq s^3\iint\limits_{(0,T)\times\omega_1} \eta(x)e^{-2s\alpha}\xi^3\abs{\psi}^2\dx\dt \\
	&=-s^3\iint\limits_{(0,T)\times\omega_1}\eta(x)e^{-2s\alpha}\xi^3\frac{\psi}{a_{21}(t,x)}\pts{\phi_t+\Delta\phi+a_{11}(t,x)\phi}\dx\dt \\ 
	%&\leq \frac{s^3}{\hat a_{21}}\iint\limits_{(0,T)\times\Omega}\eta(x)e^{-2s\alpha}\xi^3\psi\pts{\phi_t+\Delta\phi+a_{11}(t,x)\phi}\,dx\,dt\\
	&\leq \abs{\frac{s^3}{\hat a_{21}}\iint_{Q_T}\eta(x)e^{-2s\alpha}\xi^3\psi\pts{\phi_t+\Delta\phi+a_{11}(t,x)\phi}\dx\dt}.
\end{aligned}
\end{equation}

Integrating by parts in time and using triangle inequality yields  
\begin{equation*}%\label{e1.17}
\begin{aligned} 
	&s^3\iint\limits_{(0,T)\times\omega_0}e^{-2s\alpha}\xi^3\abs{\psi}^2\,dx\,dt\\
	&\leq \abs{\frac{s^3}{\hat a_{21}}\iint\limits_{(0,T)\times\Omega}\pts{\eta(x) \pts{e^{-2s\alpha}\xi^3}_t\psi \phi+\eta(x)e^{-2s\alpha}\xi^3\psi_t \phi}\,dx\,dt} \\
	&\hspace{0.2cm}+\abs{\frac{s^3}{\hat{a}_{21}}\iint\limits_{(0,T)\times \Omega}\eta(x)e^{-2s\alpha}\xi^3\psi\Delta\phi\,dx\,dt} +\abs{\frac{s^3}{\hat{a}_{21}}\iint\limits_{(0,T)\times \Omega}a_{11}(t,x)\eta(x)e^{-2s\alpha}\xi^3\psi\phi\,dx\,dt},	
\end{aligned}
\end{equation*}
whence, substituting the second equation of \eqref{eq:adjoint} in the first term of the right-hand side of the previous inequality we get
\begin{equation}\label{e1.18}
\small
\begin{aligned}
	&s^3\iint\limits_{(0,T)\times\omega_1}e^{-2s\alpha}\xi^3\abs{\psi}^2\dx\dt 
	\leq \abs{\frac{s^3}{\hat a_{21}}\iint_{Q_T}\eta(x) \pts{e^{-2s\alpha}\xi^3}_t\psi\phi\,\dx\dt }\\
	 &+\abs{\frac{s^3}{\hat a_{21}}\iint_{Q_T}\eta(x)e^{-2s\alpha}\xi^3\sigma\Delta\psi\phi\,\dx\dt} 
	 +\abs{\frac{s^3}{\hat a_{21}}\iint_{Q_T}\eta(x)e^{-2s\alpha}\xi^3a_{12}(t,x)\abs{\phi}^2\dx\dt}\\
	 &\hspace{1cm}+\abs{\frac{s^3}{\hat a_{21}}\iint_{Q_T}\eta(x)e^{-2s\alpha}\xi^3a_{22}(t,x)\psi\phi\, \dx\dt} \\
	&\hspace{2cm}+\abs{\frac{s^3}{\hat{a}_{21}}\iint_{Q_T}\eta(x)e^{-2s\alpha}\xi^3\psi\Delta\phi\,\dx\dt} \\
	&\hspace{3cm}+\abs{\frac{s^3}{\hat{a}_{21}}\iint_{Q_T} a_{11}(t,x)\eta(x)e^{-2s\alpha}\xi^3\psi\phi \,\dx \dt}\\
	& \hspace{4cm}=\sum_{i=1}^6\abs{K_i}.			
\end{aligned}
\end{equation}

Before estimating the terms $K_i$ for $i\in\{1,\ldots,6\}$, we establish the following estimates for the derivatives of the Carleman weights
\begin{align}\label{eq:deriv_t_weight}
 \abs{\pts{e^{-2s\alpha}\xi^3}_t}&\leq Cs^2\xi^5 e^{-2s\alpha}, \quad 
 \abs{\pts{e^{-2s\hat\alpha}{\hat\xi}^{-2}}_t}\leq Cs^2e^{-2s\hat\alpha},\\ \label{eq:deriv_x_weight}
 \abs{\nabla\pts{e^{-2s\alpha}\xi^3}}&\leq Ce^{-2s\alpha}s\xi^4, \quad 
 \abs{\Delta\pts{e^{-2s\alpha}\xi^3}}\leq Ce^{-2s\alpha}s^2\xi^5.
\end{align}
From \eqref{eq:deriv_t_weight}, together with  Cauchy–Schwarz and Young inequalities, we have
\begin{equation}\label{e1.21}
\begin{aligned}
	&\abs{K_1}\leq 	C\delta s^3\iint_{Q_T}e^{-2s\alpha}\xi^3\abs{\psi}^2\dx\dt \\
	&\hspace{2.5cm}+C_{\delta}s^7\iint\limits_{(0,T)\times\omega_1}e^{-2s\alpha}\xi^7\abs{\phi}^2\dx\dt,
\end{aligned}
\end{equation}
for any $\delta\in(0,1)$. 

By definition of the functions $\hat\alpha$, $\alpha^*$, $\hat\xi$ y $\xi^*$ given in \eqref{e1.3} and applying Cauchy–Schwarz and Young inequalities 
\begin{equation}\label{e1.22}
\begin{aligned}
	&\abs{K_2}\leq C\sigma^2s^{-2}\iint_{Q_T}e^{-2s\hat\alpha}\hat\xi^{-2}\abs{\Delta \psi}^2\dx\dt\\
	&\hspace{2.5cm}+Cs^8\iint\limits_{(0,T)\times\Omega}\eta^2(x)e^{-4s\alpha+2s\hat\alpha}\hat\xi^{2}\xi^6\abs{\phi}^2\dx\dt\\
	&\qquad \leq  C\sigma^2s^{-2}\iint_{Q_T}e^{-2s\hat\alpha}\hat\xi^{-2}\abs{\Delta \psi}^2\dx\dt\\
	&\hspace{2.9cm}+Cs^8\iint\limits_{(0,T)\times\omega_1}e^{-4s\alpha^*+2s\hat\alpha}\pts{\xi^*}^8\abs{\phi}^2\dx\dt.
\end{aligned}
\end{equation}

To estimate $K_3$, we recall that \eqref{e1.13} holds. This implies that
\begin{equation}\label{e1.23}
	\nor{a_{ij}}_{L^{\infty}}\leq \pts{\frac{s}{CT^2}}^{\frac{3}{2}}\leq C s^{\frac{3}{2}}\xi^{\frac{3}{2}},
\end{equation}
for $i,j=1,2$. Using the previous estimate for the coefficient $a_{12}$ we get
\begin{equation}\label{e1.24}
\begin{aligned}
	&\abs{K_3}\leq Cs^{\frac{9}{2}}\iint\limits_{(0,T)\times\omega_1}e^{-2s\alpha}\xi^{\frac{9}{2}}\abs{\phi}^2\dx\dt.
\end{aligned}
\end{equation}
In the same spirit, using \eqref{e1.23} and applying the Cauchy-Schwarz and Young inequalities, we can estimate
\begin{equation}\label{e1.25}
\begin{aligned}
	\abs{K_i}&\leq C\iint\limits_{(0,T)\times\Omega}\eta(x)e^{-2s\alpha}s^{\frac{9}{2}}\xi^{\frac{9}{2}}\abs{\psi}\abs{\phi} \dx\dt\\
	&\leq C\delta s^3\iint_{Q_T} e^{-2s\alpha}\xi^3\abs{\psi}^2\dx\dt  +C_{\delta}s^6\iint\limits_{(0,T)\times\omega_1} e^{-2s\alpha}\xi^{6}\abs{\phi}^2\dx\dt,
\end{aligned}
\end{equation}
for $i=4,6$ and any $\delta\in(0,1)$. 

Now, we estimate the term $\abs{K_5}$. Firstly, we integrate by parts the term $K_5$
\begin{equation*}%\label{e1.27}
%\footnotesize
\begin{aligned}
		&K_5=\frac{s^3}{\hat{a}_{21}}\pts{-\iint_{Q_T}\nabla\lvs{\eta(x)e^{-2s\alpha}\xi^3\psi}\nabla\phi\dx\dt+\iint_{\Sigma_T}\frac{\partial\phi}{\partial \nu}\eta(x)e^{-2s\alpha}\xi^3\psi\dS\dt}\\
		&=-\frac{s^3}{\hat{a}_{21}}\iint_{Q}\nabla\lvs{\eta(x)e^{-2s\alpha}\xi^3}\psi\nabla\phi\,\dx\dt-\frac{s^3}{\hat{a}_{21}}\iint_{Q}\eta(x)e^{-2s\alpha}\xi^3\nabla\psi\nabla\phi\,\dx\dt,
\end{aligned}
\end{equation*}
where we have used that $\phi$ has homogeneous Neumann boundary conditions. Once again, integrating by parts in $Q_T$ 
\begin{align*}\notag
		K_5&=\frac{s^3}{\hat{a}_{21}}\iint_{Q_T}\Delta\lvs{\eta(x)e^{-2s\alpha}\xi^3}\psi\phi\,\dx\dt+\frac{s^3}{\hat{a}_{21}}\iint_{Q_T}\nabla\lvs{\eta(x)e^{-2s\alpha}\xi^3}\nabla\psi\phi\,\dx\dt \\ \notag
		&\quad -\frac{s^3}{\hat{a}_{21}}\iint_{\Sigma} \phi\psi\frac{\partial\pts{\eta e^{-2s\alpha}\xi^3}}{\partial \nu} \dS \dt- \frac{s^3}{\hat{a}_{21}}\iint_{\Sigma} \phi \frac{\partial \psi}{\partial\nu}e^{-2s\alpha}\xi^3 \dS \dt \\  \notag
		&\quad +\frac{s^3}{\hat{a}_{21}}\iint_{Q_T}\nabla\lvs{\eta(x)e^{-2s\alpha}\xi^3}\nabla\psi\phi \,\dx \dt+\frac{s^3}{\hat{a}_{21}}\iint_{Q_T}\eta(x)e^{-2s\alpha}\xi^3\Delta\psi\phi \,\dx\dt	\\ \notag
		&=\frac{s^3}{\hat{a}_{21}}\iint_{Q_T}\Delta\lvs{\eta(x)e^{-2s\alpha}\xi^3}\psi\phi \,\dx\dt+\frac{2s^3}{\hat{a}_{21}}\iint_{Q_T}\nabla\lvs{\eta(x)e^{-2s\alpha}\xi^3}\nabla\psi\phi \,\dx \dt \\ %\label{e1.28}
		&\quad +\frac{s^3}{\hat{a}_{21}}\iint_{Q_T}\eta(x)e^{-2s\alpha}\xi^3\Delta\psi\phi \, \dx \dt,
\end{align*}
where using that $\eta$ is compactly supported in $\Omega$ (recall eq. \eqref{e1.15}) we have that  $\eta=\nabla\eta=0$ on $\partial\Omega$ and the two integrals  on the boundary vanish.

Finally, using \eqref{eq:deriv_x_weight} we obtain
\begin{equation*}
\begin{aligned}
		\abs{K_5}&\leq C\iint_{Q}\abs{\Delta\eta}e^{-2s\alpha}s^3\xi^3\abs{\psi}\abs{\phi}\dx\dt+C\iint_{Q}\abs{\nabla\eta}e^{-2s\alpha}s^4\xi^4\abs{\psi}\abs{\phi}\dx\dt\\
		&\quad+C\iint_{Q}\eta(x)Ce^{-2s\alpha}s^5\xi^5\abs{\psi}\abs{\phi}\dx\dt + C\iint_{Q}\abs{\nabla\eta}e^{-2s\alpha}s^3\xi^3\abs{\nabla\psi}\abs{\phi}\dx\dt\\
		&\quad+C\iint_{Q}\eta(x)Ce^{-2s\alpha}s^4\xi^4\abs{\nabla\psi}\abs{\phi}\dx\dt +C\iint_{Q}\eta(x)e^{-2s\alpha}s^3\xi^3\abs{\Delta\psi}\abs{\phi}\dx\dt.
\end{aligned}
\end{equation*}
By Young inequality and the fact that $\eta\in C_0^\infty(\omega_1)$ we get
\begin{equation}\label{e1.29}
\small
\begin{aligned}
	\abs{K_5}&\leq C_{\delta}s^3\iint\limits_{(0,T)\times\omega_1}e^{-2s\alpha}\xi^3\abs{\phi}^2\dx\dt+ 2C_{\delta}s^5\iint\limits_{(0,T)\times\omega_1}e^{-2s\alpha}\xi^5\abs{\phi}^2\dx\dt\\
& \quad+ 3C_{\delta}s^7\iint\limits_{(0,T)\times\omega_1}e^{-2s\alpha}\xi^7\abs{\phi}^2\dx\dt+3C\delta s^3\iint_{Q_T} e^{-2s\alpha}\xi^3\abs{\psi}^2\dx\dt\\
		&\quad +2C\delta s\iint_{Q_T}e^{-2s\alpha}\xi\abs{\nabla\psi}^2\dx\dt+C\delta s^{-1}\iint_{Q_T}e^{-2s\alpha}\xi^{-1}\abs{\Delta\psi}^2\dx\dt,
\end{aligned}
\end{equation}
for any $\delta\in(0,1)$.

Putting together \eqref{e1.16}, \eqref{e1.18} and the estimates for $\abs{K_i}$, for $i=1,\ldots, 6$, obtained in \eqref{e1.21}, \eqref{e1.22}, \eqref{e1.24}, \eqref{e1.25} and \eqref{e1.29} yields 
\begin{equation*}%\label{e1.30}
\begin{aligned}
	s^3&\iint\limits_{(0,T)\times\omega_0}e^{-2s\alpha}\xi^3\abs{\psi}^2\dx\dt\\
	&\leq  6C\delta s^3\iint_{Q_T}e^{-2s\alpha}\xi^3\abs{\psi}^2\dx\dt +4C_{\delta}s^7\iint\limits_{(0,T)\times\omega_1}e^{-2s\alpha}\xi^7\abs{\phi}^2\dx\dt  \\ %K_1
		  &\quad +C\sigma^2s^{-2}\iint_{Q_T}e^{-2s\hat\alpha}\hat\xi^{-2}\abs{\Delta \psi}^2\dx\dt	+Cs^8\iint\limits_{(0,T)\times\omega_1}e^{-4s\alpha^*+2s\hat\alpha}\pts{\xi^*}^8\abs{\phi}^2\dx\dt  \\ %K_2 
		  &\quad+Cs^{\frac{9}{2}}\iint\limits_{(0,T)\times\omega_1}e^{-2s\alpha}\xi^{\frac{9}{2}}\abs{\phi}^2\dx\dt+2C_{\delta}s^6\iint\limits_{(0,T)\times\omega_1} e^{-2s\alpha}\xi^{6}\abs{\phi}^2\dx\dt    \\  %K_3   %K_4
		  &\quad+C_{\delta}s^3\iint\limits_{(0,T)\times\omega_1}e^{-2s\alpha}\xi^3\abs{\phi}^2\dx\dt+ 2C_{\delta}s^5\iint\limits_{(0,T)\times\omega_1}e^{-2s\alpha}\xi^5\abs{\phi}^2\dx\dt\\
		&\quad+2C\delta s\iint_{Q_T}e^{-2s\alpha}\xi\abs{\nabla\psi}^2\dx\dt+C\delta s^{-1}\iint_{Q_T}e^{-2s\alpha}\xi^{-1}\abs{\Delta\psi}^2\dx\dt. %K_5  %K_6		
\end{aligned}
\end{equation*}
We obtain by the definition of $\hat\alpha$, $\alpha^*$ in \eqref{e1.3} and $\xi^{-1}\leq CT^2$ that  $s^p\xi^p e^{-2s\alpha}\leq Cs^8\xi^8e^{-4s\alpha^*+2s\hat\alpha}$ for any $p<8$. Then, using this in the localized terms, we get  
\begin{equation}\label{e1.31}
\begin{aligned}
	s^3\iint\limits_{(0,T)\times\omega_0}&e^{-2s\alpha}\xi^3\abs{\psi}^2\dx\dt\\
	\leq&  6C\delta s^3\iint_{Q_T}e^{-2s\alpha}\xi^3\abs{\psi}^2\dx\dt +2C\delta s\iint_{Q_T}e^{-2s\alpha}\xi\abs{\nabla\psi}^2\dx\dt \\
	&+C\delta s^{-1}\iint_{Q_T}e^{-2s\alpha}\xi^{-1}\abs{\Delta\psi}^2\dx\dt +C\sigma^2s^{-2}\iint\limits_{Q_T}e^{-2s\hat\alpha}\hat\xi^{-2}\abs{\Delta \psi}^2\dx\dt	\\
		  &+Cs^8\iint\limits_{(0,T)\times\omega_1}e^{-4s\alpha^*+2s\hat\alpha}\pts{\xi^*}^8\abs{\phi}^2\dx\dt, 
\end{aligned}
\end{equation}
for any $\delta\in(0,1)$.

\textbf{Step 3: a uniform global estimate.} Note that the first three terms in the right-hand side of \eqref{e1.31} are multiplied by a small parameter $\delta$ but the fourth one has the large parameter $\sigma$ in front of it. Thus, the idea is to estimate it uniformly with respect to $\sigma$. 

Using the second equation of \eqref{eq:adjoint}, multiplying by $e^{-2s\hat\alpha}{\hat{\xi}}^{-2}\Delta \psi$, integrating by parts, and applying the  Cauchy-Schwarz inequality we obtain
\begin{equation*}
\begin{aligned}
	&\sigma\iint_{Q_T}e^{-2s\hat\alpha}{\hat{\xi}}^{-2}\abs{\Delta\psi}^2\dx\dt\\
	&\leq\iint_{Q_T}e^{-2s\hat\alpha}{\hat{\xi}}^{-2}\nabla\psi\nabla\psi_t\dx\dt- \iint_{\Sigma}\frac{\partial \psi}{\partial \nu}\psi_te^{-2s\hat\alpha}{\hat{\xi}}^{-2}\dS\dt\\
	&\hspace{1cm}+\iint_{Q_T}e^{-2s\hat\alpha}{\hat{\xi}}^{-2}\nor{a_{12}}_{L^{\infty}}\abs{\Delta\psi}\abs{\phi}\dx\dt\\
	&\hspace{2cm}+\iint_{Q_T}e^{-2s\hat\alpha}{\hat{\xi}}^{-2}\nor{a_{22}}_{L^{\infty}}\abs{\Delta\psi}\abs{\psi}\dx\dt.
\end{aligned}
\end{equation*}  
Taking into account \eqref{e1.23} and the fact that $\hat{\xi}\leq \xi\leq C\hat{\xi}$ in $(0,T)\times\overline{\Omega}$,where $C$ is a constant only depending on $\Omega$ and $\omega$, we deduce 
\begin{equation}\label{e1.32}
\begin{aligned}
	&\sigma\iint_{Q_T}e^{-2s\hat\alpha}{\hat{\xi}}^{-2}\abs{\Delta\psi}^2\dx\dt\\
	&\leq \frac{1}{2}\iint_{Q_T}e^{-2s\hat\alpha}{\hat{\xi}}^{-2}\pts{\abs{\nabla\psi}^2}_t\dx\dt\\
	&\hspace{0.5cm}+2\delta\sigma\iint_{Q_T}e^{-2s\hat\alpha}{\hat{\xi}}^{-2}\abs{\Delta\psi}^2\dx\dt+C_{\delta}\sigma^{-1}s^3\iint_{Q_T}e^{-2s\hat\alpha}\hat\xi\abs{\phi}^2\,dx\,dt\\
	&\hspace{2cm}+C_{\delta}\sigma^{-1}s^3\iint_{Q_T}e^{-2s\hat\alpha}\hat\xi\abs{\psi}^2\dx\dt.
\end{aligned}
\end{equation}
In order to estimate the first integral at the right-hand side of inequality \eqref{e1.32}, we integrate on $(0,T)$ and continue by integrating by parts on $\Omega$, to obtain
\begin{equation}\label{e1.33}
\begin{aligned}
	&\frac{1}{2}\iint_{Q_T}e^{-2s\hat\alpha}{\hat{\xi}}^{-2}\pts{\abs{\nabla\psi}^2}_t\dx\dt\\
	&=\frac{1}{2}\pts{\left.\int_{\Omega}e^{-2s\hat\alpha}{\hat{\xi}}^{-2}\pts{\abs{\nabla\psi}^2}\dx \right|_0^T- \iint_{Q_T}\pts{e^{-2s\hat\alpha}{\hat\xi}^{-2}}_t\abs{\nabla\psi}^2\dx\dt}\\
	&= \frac{1}{2}\iint_{Q_T}\pts{e^{-2s\hat\alpha}{\hat\xi}^{-2}}_t\psi\Delta\psi\dx\dt \\
	&\leq Cs^2\iint_{Q_T}e^{-2s\hat\alpha}\abs{\psi}\abs{\Delta\psi}\dx\dt,
\end{aligned}
\end{equation}
where we have used that $\abs{\pts{e^{-2s\hat\alpha}{\hat\xi}^{-2}}_t}\leq Cs^2e^{-2s\hat\alpha}$ to get the last line of the above expression. Substituting \eqref{e1.33} into \eqref{e1.32} and applying Young's inequality yields
\begin{equation}\label{e1.34}
\begin{aligned}
	&\sigma\iint_{Q_T}e^{-2s\hat\alpha}{\hat{\xi}}^{-2}\abs{\Delta\psi}^2\dx\dt\\
	&\leq 3\delta\sigma\iint_{Q_T}e^{-2s\hat\alpha}{\hat\xi}^{-2}\abs{\Delta\psi}^2\dx\dt+C_{\delta}\sigma^{-1}s^4\iint_{Q_T}e^{-2s\hat\alpha}{\hat\xi}^2\abs{\psi}^2\dx\dt\\
	&\quad+C_{\delta}\sigma^{-1}s^3\iint_{Q_T}e^{-2s\hat\alpha}\hat\xi\abs{\phi}^2\dx\dt+C_{\delta}\sigma^{-1}s^3\iint_{Q_T}e^{-2s\hat\alpha}\hat\xi\abs{\psi}^2\dx\dt,
\end{aligned}
\end{equation}
for $\delta\in(0,1)$. If we choose $\delta$ sufficiently small, observe that in \eqref{e1.34} we can absorb the first term in the right-hand side to obtain
\begin{equation}\label{e1.35}
\begin{aligned}
	&\sigma\iint_{Q_T}e^{-2s\hat\alpha}{\hat{\xi}}^{-2}\abs{\Delta\psi}^2\dx\dt\leq C\sigma^{-1}s^4\iint_{Q_T}e^{-2s\hat\alpha}{\hat\xi}^2\abs{\psi}^2\dx\dt\\
	&\hspace{0.5cm}+C\sigma^{-1}s^3\iint_{Q_T}e^{-2s\hat\alpha}\hat\xi\abs{\phi}^2\dx\dt+C\sigma^{-1}s^3\iint_{Q_T}e^{-2s\hat\alpha}\hat\xi\abs{\psi}^2\dx\dt.
\end{aligned}
\end{equation}
Multiplying the inequality \eqref{e1.35} by $\sigma s^{-2}$ on both sides we get
\begin{equation}\label{e1.36}
\begin{aligned}
	&\sigma^2s^{-2}\iint_{Q_T}e^{-2s\hat\alpha}{\hat{\xi}}^{-2}\abs{\Delta\psi}^2\dx\dt\leq Cs^2\iint_{Q_T}e^{-2s\hat\alpha}{\hat\xi}^2\abs{\psi}^2\dx\dt\\
	&\hspace{0.5cm}+Cs\iint_{Q_T}e^{-2s\hat\alpha}\hat\xi\abs{\phi}^2\dx\dt+Cs\iint_{Q_T}e^{-2s\hat\alpha}\hat\xi\abs{\psi}^2\,dx\,dt,
\end{aligned}
\end{equation}
and using the properties of the functions defined in \eqref{e1.2} and \eqref{e1.3} we have that $e^{-2s\hat\alpha}\leq e^{-2s\alpha}$  and $\hat\xi\leq\xi$ which in combination with \eqref{e1.36}, yield
\begin{equation}\label{e1.37}
\begin{aligned}
	&\sigma^2s^{-2}\iint_{Q_T}e^{-2s\hat\alpha}{\hat{\xi}}^{-2}\abs{\Delta\psi}^2\,dx\,dt\leq Cs^2\iint_{Q_T}e^{-2s\alpha}\xi^2\abs{\psi}^2\,dx\,dt\\
	&\hspace{0.5cm}+Cs\iint_{Q_T}e^{-2s\alpha}\xi\abs{\phi}^2\,dx\,dt+Cs\iint_{Q_T}e^{-2s\alpha}\xi\abs{\psi}^2\,dx\,dt. 
\end{aligned}
\end{equation}

\textbf{Step 4: conclusion.} Recalling \eqref{e1.4}, we put put together estimates \eqref{e1.14}, \eqref{e1.31} and \eqref{e1.37}. Then taking $\delta \in (0,1)$ small enough and choosing the parameter $s$ as in \eqref{e1.13} we get
\begin{equation*}\label{e1.39}
\begin{aligned}
	s^3\iint_Q e^{-2s\alpha}\xi^3\abs{\phi}^2\,dx\,dt&+s^3\iint_Q e^{-2s\alpha}\xi^3\abs{\psi}^2\,dx\,dt \\ &\leq Cs^8\iint_{(0,T)\times\omega}e^{-4s\alpha^*+2s\hat\alpha}\pts{\xi^*}^8\abs{\phi}^2,
\end{aligned}
\end{equation*}
which is the desired result. 
\end{proof}

Now, we are in position to prove \Cref{Desigualdad-obs}.

\begin{proof}[Proof of \Cref{Desigualdad-obs}]
From \Cref{p1.2}, we have that for any $(\phi_T,\psi_T)\in [L^2(\Omega)]^2$, the solutions $(\phi,\psi)$ of \eqref{eq:adjoint} verify
\begin{equation}\label{eq:car_ineq_final}
\begin{aligned}
	s^3\iint_{Q_T} e^{-2s\alpha}\xi^3\abs{\phi}^2\dx\dt&+s^3\iint_{Q_T} e^{-2s\alpha}\xi^3\abs{\psi}^2\dx\dt \\ &\leq Cs^8\iint_{(0,T)\times\omega}e^{-4s\alpha^*+2s\hat\alpha}\pts{\xi^*}^8\abs{\phi}^2\dx\dt,
\end{aligned}
\end{equation}
for any 
\begin{equation}\label{eq:s_final}
	s\geq C\pts{T+T^2+T^2\max_{i,j=1,2}\nor{a_{ij}}_{L^\infty}^{\frac{2}{3}}}.
\end{equation}
where $C>0$ is a constant independent of $T$ and $a_{ij}$. 

Now, our goal is to bound the exponential weights in both sides of \eqref{eq:car_ineq_final}. To simplify the notation, we introduce the following
\begin{equation*}
\begin{aligned}
	&\tilde{M}_0:=\max_{x\in \bar\Omega}\pts{e^{2\lambda\nor{\eta^0}_{L^{\infty}}}-e^{\lambda\eta^0(x)}},\quad M_0:=\max_{x\in \bar\Omega}e^{\lambda\eta^0(x)}, \\
	&\tilde{m}_0:=\min_{x\in \bar\Omega}\pts{e^{2\lambda\nor{\eta^0}_{L^{\infty}}}-e^{\lambda\eta^0(x)}}, \quad m_0:=\min_{x\in \bar\Omega}e^{\lambda\eta^0(x)}.\\
\end{aligned}
\end{equation*}

Let $s_0:=\max\{C,\frac{1}{2^4m_0}\}$ where $C$ is the constant appearing in \eqref{eq:s_final} and fix 
\begin{equation}\label{eq:s_0}
s=s_0(T+T^2+T^2+\max_{i,j=1,2}\nor{a_{ij}}_{L^\infty}^{\frac{2}{3}}).
\end{equation}
We prove first that
\begin{equation}\label{e1.41}
	s^8(\xi^*)^8e^{-4s\alpha^*+2s\hat\alpha}\leq \pts{\frac{2^3M_0}{\tilde{m}_0e}}^8,
\end{equation} 
in $(0,T)$. Applying  \eqref{e1.40} with $\epsilon=\frac{1}{2}$ to the left-hand term of \eqref{e1.41} we see that
\begin{align} \notag
	s^8{\xi^*}^8e^{-4s\alpha^*+2s\hat\alpha}&\leq s^8{\xi^*}^8e^{-\alpha^*s}=s^8\frac{M_0^8}{t^8\pts{t-T}^8}e^{-\frac{\tilde{m}_0}{t\pts{T-t}}s}\\ \notag
	&\leq s^8\max_{t\in\pts{0,T}}\lvs{\frac{M_0^8}{t^8\pts{t-T}^8}e^{-\frac{\tilde{m}_0}{t\pts{T-t}}s}}
	=s^8M_0^8\frac{2^{16}}{T^{16}}e^{-\frac{2^2\tilde{m}_0}{T^2}s}\\ \label{eq:est_above_weight}
	&\leq M_0^8\frac{2^{16}}{T^{16}}\max_{s>0}\lvs{s^8e^{-\frac{2^2\tilde{m}_0}{T^2}s}}
	= \pts{\frac{2^3M_0}{\tilde{m}_0e}}^8,
\end{align}
where we have used that $\max_{x>0}x^a e^{-bx}=\left(\frac{a}{eb}\right)^{a}$ at $x=\frac{a}{b}$. 

On the other hand
\begin{equation}\label{e1.43}
	 s^3\xi^3e^{-2s\alpha}\geq\frac{1}{3^3}e^{-\frac{Cs}{T^2}} \quad \text{in }\pts{\frac{T}{4},\frac{3T}{4}}\times\Omega.
\end{equation}
Indeed,
\begin{align}\notag 
	 s^3\xi^3e^{-2s\alpha}&\geq \frac{s^3m_0^3}{t^3\pts{T-t}^3}e^{-\frac{2\tilde{M}_0}{t\pts{T-t}}s}\\ \notag
	 &\geq s^3m_0^3\min_{t\in\cts{\frac{T}{4},\frac{3T}{4}}}\lvs{\frac{1}{t^3\pts{t-t}^3}e^{-\frac{2\tilde{M}_0}{t\pts{T-t}}s}}\\ \label{e1.42}
	 &=s^3\pts{\frac{2^4m_0}{3T^2}}^3e^{-\frac{2^5\tilde{M}_0}{3T^2}s}.
\end{align}
From the particular choice of $s$ in \eqref{eq:s_0}, $s\geq\frac{T^2}{2^4m_0}$ and this yields \eqref{e1.43}. Using the bounds \eqref{eq:est_above_weight} and \eqref{e1.42} in \eqref{eq:car_ineq_final} gives
\begin{align}\label{eq.63}
\iint\limits_{(T/4,3T/4)\times\Omega} \abs{\phi}^2\dx\dt&+\iint\limits_{(T/4,3T/4)\times\Omega} \abs{\psi}^2\dx\dt \leq Ce^{\frac{Cs}{T^2}}\iint_{(0,T)\times\omega}\abs{\phi}^2\dx\dt,
\end{align}
for some $C>0$ independent of $T$ and $a_{ij}$.

Now, multiplying the first equation of \eqref{eq:adjoint} by $\phi$ and integrating over $\Omega$ (resp. multiplying the second equation of \ref{eq:adjoint} by $\psi$ and integrating over $\Omega$), we can deduce after some direct computations that
\begin{align*}\notag 
-\frac{d}{dt}&\int_{\Omega}\pts{\abs{\phi}^2+\abs{\psi}^2}\dx+2\int_{\Omega} \pts{\abs{\nabla\phi}^2+\abs{\nabla\psi}^2}\dx \\ %\label{eq.64a}
&\leq2\sum_{i,j}\nor{a_{ij}}_{L^\infty}\int_{\Omega}\pts{\abs{\phi}^2+\abs{\psi}^2}\dx.
\end{align*}
Then
\begin{equation}\label{eq.64}
\frac{d}{dt}\pts{e^{2\sum_{i,j}\nor{a_{ij}}_{L^\infty}t}\int_{\Omega}\pts{\abs{\phi}^2+\abs{\psi}^2}\dx}\geq 0.
\end{equation}
Let us now integrate this inequality on $\cts{\frac{T}{4},t}$ with $t\in\cts{T/4,3T/4}$
\begin{align*}
\int_{\Omega}&\left(\abs{\phi}^2+\abs{\psi}^2\right)\dx\\
&\geq e^{-\sum_{i,j}T\nor{a_{ij}}_{L^\infty}}\int_{\Omega}\left(\abs{\phi(T/4,x)}^2+\abs{\psi(T/4,x)}^2\right)\dx.
\end{align*}
Integrating on $\cts{T/4,3T/4}$ and after some straightforward estimates we have
\begin{align}\notag \int_{\Omega}&\left(\abs{\phi(T/4,x)}^2+\abs{\psi(T/4,x)}^2\right)\dx\\ \label{eq.66} 
    &\leq Ce^{C\pts{T^{-1}+T\sum_{i,j}\nor{a_{ij}}_{L^\infty}}}\iint\limits_{(T/4,3T/4)\times\Omega}\left(\abs{\phi}^2+\abs{\psi}^2\right)\dx.
\end{align}
On the other hand, using again the inequality \eqref{eq.64} and integrating on $\cts{0,T/4}$ we have
\begin{align} \notag 
\int_{\Omega}&\left(\abs{\phi(0,x)}^2+\abs{\psi(0,x)}^2\right)\dx\\ \label{eq.67}
& \leq e^{CT\sum_{i,j}\nor{a_{ij}}_{L^\infty}}\int\limits_{\Omega}\left(\abs{\phi(T/4,x)}^2+\abs{\psi(T/4,x)}^2\right) \dx.
\end{align}
Finally, combining the estimates \eqref{eq.63}, \eqref{eq.66} and \eqref{eq.67}, we obtain the observability inequality \eqref{e1.44}.
\end{proof}

\subsection{Controllability of the linear system with explicit control cost}\label{seccion-4}%\label{Odwlm}
In order to find a null control for \eqref{eq:r-d_controlled} with explicit control cost and uniform with respect to diffusion coefficient $\sigma$, we follow the classical penalized Hilbert Uniqueness Method (HUM) (see e.g. \cite{GLH08} or \cite{Boy13}). 

We begin by stating a uniform energy estimate  (w.r.t. $\sigma$) for the solutions of system \eqref{eq:r-d_controlled}. The result reads as follows. 
\begin{lem}\label{Lem:energy-estimate-l}
Let $\sigma\geq 1$, $(y^0,z^0)\in [L^2(\Omega)]^2$ and $h\in L^2(0,T;L^2(\omega))$ be given. There exists a constant $C>0$ depending only $\Omega$ such that $\pts{y,z}$ the solution of system \eqref{eq:r-d_controlled} satisfies the following estimate 
\begin{equation}\label{energy-estimate-l}
\begin{aligned}
	&\nor{y}_{L^2\pts{0,T;H^1(\Omega)}}^2+\nor{z}_{L^2\pts{0,T;H^1(\Omega)}}^2\\
	&+\nor{y_t}_{L^2(0,T;(H^1(\Omega))')}^2+\nor{z_t}_{L^2(0,T;(H^1(\Omega))')}^2+ \sigma\iint_{Q_T}\abs{\nabla z}^2\,dx\,dt\\
	&\hspace{3cm}\leq e^{C\tilde{K}}\pts{\nor{y^0}_{L^2(\Omega)}^2+\nor{z^0}_{L^2(\Omega)}^2+\nor{h}_{L^2(0,T;L^2(\omega))}^2},
\end{aligned}
\end{equation}
where
\begin{equation}\label{Constante-estimado-energia}
    \Tilde{K}=\left(\sum_{i,j=1}^2\nor{a_{ij}}_{L^{\infty}(Q_T)}^2+1\right)(T+1).
\end{equation}
\end{lem}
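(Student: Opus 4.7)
My plan is to combine a classical $L^2$ energy estimate together with Gronwall's lemma for the terms involving $\nor{y}$, $\nor{z}$, the gradients, and the $\sigma$-weighted gradient, and then deduce the time-derivative bounds by duality.

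For the first step, I would test the first equation of \eqref{eq:r-d_controlled} against $y$, the second against $z$, integrate over $\Omega$, and use the homogeneous Neumann condition to eliminate the boundary terms. Summing the two resulting identities produces
\[
\frac{1}{2}\frac{d}{dt}\pts{\nor{y}_{L^2}^2+\nor{z}_{L^2}^2} + \nor{\nabla y}_{L^2}^2 + \sigma\nor{\nabla z}_{L^2}^2 = \int_\Omega\pts{a_{11}y^2+(a_{12}+a_{21})yz+a_{22}z^2}\dx+\int_\omega h\,y\,\dx.
\]
Bounding the right-hand side with Cauchy--Schwarz and Young's inequality yields a Gronwall-type inequality whose coefficient is of order $C(1+\sum_{i,j}\nor{a_{ij}}_{L^\infty}^2)$. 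Invoking Gronwall's lemma then gives
\[
\sup_{t\in[0,T]}\pts{\nor{y(t)}^2+\nor{z(t)}^2}+\iint_{Q_T}\pts{\abs{\nabla y}^2+\sigma\abs{\nabla z}^2}\dx\dt\leq e^{C\tilde K}\pts{\nor{y^0}^2+\nor{z^0}^2+\nor{h}_{L^2(0,T;L^2(\omega))}^2},
\]
which takes care of the $L^2(0,T;H^1)$ norms of $y$ and $z$ together with the $\sigma$-weighted gradient term on the left of \eqref{energy-estimate-l}.

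For the $y_t$-bound I would work by duality: given $\phi\in H^1(\Omega)$ with $\nor{\phi}_{H^1}\leq 1$, testing the first equation of \eqref{eq:r-d_controlled} against $\phi$ and applying Green's identity (using $\partial_\nu y=0$) gives
\[
\langle y_t,\phi\rangle_{(H^1)',H^1}=-\int_\Omega\nabla y\cdot\nabla\phi\,\dx+\int_\Omega\pts{a_{11}y+a_{12}z+h\mathbf{1}_\omega}\phi\,\dx,
\]
so that $\nor{y_t(t)}_{(H^1)'}\leq\nor{\nabla y(t)}+C\sum_{i,j}\nor{a_{ij}}_{L^\infty}(\nor{y(t)}+\nor{z(t)})+\nor{h(t)}_{L^2(\omega)}$. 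Squaring, integrating in $t$, and invoking the first-step bound controls $\nor{y_t}_{L^2(0,T;(H^1)')}^2$ by the desired right-hand side.

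The analogous step for $z_t$ is where I expect the main obstacle: a direct duality argument yields $\nor{z_t(t)}_{(H^1)'}\leq\sigma\nor{\nabla z(t)}+C(\nor{y(t)}+\nor{z(t)})$, whose square integrated in $t$ involves $\sigma^2\iint_{Q_T}\abs{\nabla z}^2$, whereas the first-step estimate only controls $\sigma\iint_{Q_T}\abs{\nabla z}^2$ uniformly in $\sigma$. To recover a uniform bound, I would decompose $z=\bar z(t)+\tilde z(t,x)$ with $\bar z(t)=\dashint_\Omega z(t,\cdot)\,\dx$ and $\int_\Omega\tilde z\,\dx=0$. Averaging the second equation of \eqref{eq:r-d_controlled} over $\Omega$ and using $\int_\Omega\Delta z\,\dx=0$ (Neumann boundary condition) gives $\bar z'(t)=\dashint_\Omega(a_{21}y+a_{22}z)\,\dx$, which is uniformly bounded in $\sigma$ and immediately yields the desired control on the mean part of $z_t$ in $L^2(0,T;(H^1)')$. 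The oscillating part satisfies $\tilde z_t-\sigma\Delta\tilde z=\tilde F$ with $\int_\Omega\tilde F\,\dx=0$, and I would exploit the Poincar\'e--Wirtinger inequality (valid for zero-mean functions with Neumann traces) in the energy identity for $\tilde z$, combined with decomposing the test function $\phi=\bar\phi+\tilde\phi$ inside the duality pairing $\langle\tilde z_t,\phi\rangle=\langle\tilde z_t,\tilde\phi\rangle$, to produce the sharpened dual-norm bound that absorbs the stray $\sigma$-factor and closes \eqref{energy-estimate-l} with $e^{C\tilde K}$ independent of $\sigma$.
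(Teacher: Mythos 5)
Your Step 1 (the Gronwall energy estimate giving the $L^2(0,T;H^1)$ bounds together with $\sigma\iint_{Q_T}\abs{\nabla z}^2\dx\dt$) and your duality bound for $y_t$ are correct, uniform in $\sigma$, and are exactly the classical methodology the paper invokes (the paper omits this proof, referring to Evans, Section 7.1).

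The gap is the $z_t$ estimate, and the fix you sketch does not close it. The mean part is fine: $\bar z'(t)=\dashint_\Omega\pts{a_{21}y+a_{22}z}\dx$ is uniformly bounded. But for the fluctuation the pairing is $\langle\tilde z_t,\phi\rangle=-\sigma\int_\Omega\nabla\tilde z\cdot\nabla\tilde\phi\,\dx+\int_\Omega\tilde F\,\tilde\phi\,\dx$, and Poincar\'e--Wirtinger only controls $\nor{\tilde\phi}_{L^2(\Omega)}$ by $\nor{\nabla\tilde\phi}_{L^2(\Omega)}$; it does nothing to the factor $\sigma$ multiplying $\nabla\tilde z$, and since Step 1 only gives $\sigma\iint_{Q_T}\abs{\nabla\tilde z}^2\dx\dt\leq C$, you end with $\nor{\tilde z_t}_{L^2(0,T;(H^1(\Omega))')}^2\leq C\sigma$, not a uniform bound. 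In fact no ``sharpened dual-norm bound'' exists: take $a_{ij}\equiv 0$, $h\equiv 0$, $y^0=0$ and $z^0=e_1$, the first nonconstant Neumann eigenfunction ($-\Delta e_1=\lambda_1 e_1$, $\lambda_1>0$, $e_1$ normalized in $L^2(\Omega)$). Then $z(t)=e^{-\sigma\lambda_1 t}e_1$ and a direct computation in the eigenbasis gives $\int_0^T\nor{z_t}_{(H^1(\Omega))'}^2\dt=\frac{\sigma\lambda_1}{2(1+\lambda_1)}\pts{1-e^{-2\sigma\lambda_1 T}}$, which grows linearly in $\sigma$. So the $z_t$ term in \eqref{energy-estimate-l} cannot be obtained uniformly in $\sigma$ by any argument; that part of the estimate requires a $\sigma$-dependent constant (or must be restricted to the mean $\bar z_t$), and the uniform information that is actually available and used later is the bound on $\bar z_t$ together with the fast decay of the fluctuation via the semigroup estimates of Lemma~\ref{A.2} in \Cref{seccion-6}.
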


For the proof, we can follow a classical methodology (see e.g. \cite[Section 7.1]{evans10}), just by taking care that in each step the estimates are independent of $\sigma$. For brevity, we omit it. 

Now, we are in position to prove the following. 
\begin{theorem}\label{thm:Control-lineal}
Under the assumptions of Proposition~\ref{Desigualdad-obs}, for every $(y^0,z^0)\in [L^2(\Omega)]^2$ there exists a control $h\in L^2(0,T;L^2(\omega))$ such that the solution $(y,z)$ of the system \eqref{eq:r-d_controlled} satisfies $y(T,\cdot)=z(T,\cdot)=0$ in $\Omega$. Moreover, we have that the control cost is given by 
\begin{equation}\label{Estimate-control-1}
    \nor{ h}_{L^2(0,T;L^2(\omega))}\leq e^{CK}\pts{\nor{y^0}_{L^2(\Omega)}^2+\nor{z^0}_{L^2(\Omega)}^2},
\end{equation}
where $C>0$ is a constant independent of $\sigma$ and $K$ is defined in \eqref{e1.45}.
\end{theorem}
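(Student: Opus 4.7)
The plan is to apply the penalized Hilbert Uniqueness Method (HUM), so that the observability constant produced by Proposition~\ref{Desigualdad-obs} transfers directly to the cost estimate \eqref{Estimate-control-1}. For each $\varepsilon>0$, I would introduce the quadratic functional $J_\varepsilon:[L^2(\Omega)]^2\to\mathbb R$ defined by
\begin{equation*}
J_\varepsilon(\phi_T,\psi_T) := \tfrac12\iint_{(0,T)\times\omega}\abs{\phi}^2\dx\dt + \tfrac{\varepsilon}{2}\pts{\nor{\phi_T}_{L^2(\Omega)}^2+\nor{\psi_T}_{L^2(\Omega)}^2} + \int_\Omega\pts{y^0\phi(0)+z^0\psi(0)}\dx,
\end{equation*}
where $(\phi,\psi)$ is the solution of \eqref{eq:adjoint} associated with terminal data $(\phi_T,\psi_T)$. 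Well-posedness of \eqref{eq:adjoint}, strict convexity, and the observability inequality \eqref{e1.44} (providing coercivity, quantified by $e^{CK}$) ensure that $J_\varepsilon$ admits a unique minimizer $(\phi_T^\varepsilon,\psi_T^\varepsilon)\in [L^2(\Omega)]^2$. I would then set the candidate control $h_\varepsilon:=\phi^\varepsilon\mathbf{1}_\omega$, where $(\phi^\varepsilon,\psi^\varepsilon)$ is the adjoint trajectory with terminal data $(\phi_T^\varepsilon,\psi_T^\varepsilon)$.

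A routine integration by parts, relying on the homogeneous Neumann condition and the cancellation of the cross coupling terms between primal and dual equations, yields the duality identity
\begin{equation*}
\int_\Omega\pts{y(T)\phi_T+z(T)\psi_T}\dx - \int_\Omega\pts{y^0\phi(0)+z^0\psi(0)}\dx = \iint_{(0,T)\times\omega}h\phi\,\dx\dt,
\end{equation*}
valid for any control $h$, initial datum $(y^0,z^0)$ and terminal datum $(\phi_T,\psi_T)$. Substituting this into the Euler--Lagrange equation satisfied by $(\phi_T^\varepsilon,\psi_T^\varepsilon)$ gives the optimality relations $y_\varepsilon(T)=-\varepsilon\phi_T^\varepsilon$ and $z_\varepsilon(T)=-\varepsilon\psi_T^\varepsilon$, where $(y_\varepsilon,z_\varepsilon)$ denotes the state driven by $h_\varepsilon$. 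The comparison $J_\varepsilon(\phi_T^\varepsilon,\psi_T^\varepsilon)\leq J_\varepsilon(0,0)=0$, combined with Cauchy--Schwarz, Young's inequality and the observability estimate \eqref{e1.44}, produces the uniform bounds
\begin{equation*}
\nor{h_\varepsilon}_{L^2(0,T;L^2(\omega))}^2 + \varepsilon\pts{\nor{\phi_T^\varepsilon}_{L^2(\Omega)}^2+\nor{\psi_T^\varepsilon}_{L^2(\Omega)}^2} \leq Ce^{CK}\pts{\nor{y^0}_{L^2(\Omega)}^2+\nor{z^0}_{L^2(\Omega)}^2}.
\end{equation*}
By the optimality relations, the second contribution in turn yields $\nor{y_\varepsilon(T)}_{L^2}^2+\nor{z_\varepsilon(T)}_{L^2}^2 \leq C\varepsilon\, e^{CK}\pts{\nor{y^0}_{L^2}^2+\nor{z^0}_{L^2}^2}$, so that the final state converges strongly to zero as $\varepsilon\to 0$.

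The last step is to pass to the limit. Extracting a subsequence such that $h_\varepsilon\rightharpoonup h$ weakly in $L^2(0,T;L^2(\omega))$, I would use the $\sigma$-uniform energy estimate \eqref{energy-estimate-l} of Lemma~\ref{Lem:energy-estimate-l} together with the Aubin--Lions lemma to extract strong convergence of $(y_\varepsilon,z_\varepsilon)$ to the solution $(y,z)$ of \eqref{eq:r-d_controlled} corresponding to the control $h$ and initial data $(y^0,z^0)$. Identifying the limit in the weak formulation is standard since $a_{ij}\in L^\infty(Q_T)$, and $y_\varepsilon(T)\to 0$, $z_\varepsilon(T)\to 0$ strongly in $L^2(\Omega)$ forces $y(T)=z(T)=0$. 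Lower semicontinuity of the $L^2$-norm transfers the bound on $h_\varepsilon$ to $h$, giving \eqref{Estimate-control-1}. The only delicate point is bookkeeping: one has to make sure that the constant $K$ from \eqref{e1.45} is not inflated during the HUM construction. Since Proposition~\ref{Desigualdad-obs} already records the explicit dependence on the $a_{ij}$, and no further exponential factors enter the Young/Cauchy--Schwarz steps above, this reduces essentially to propagating the observability constant through the minimization, which is routine once the observability inequality is in hand.
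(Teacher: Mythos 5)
Your proposal is correct and follows essentially the same route as the paper: penalized HUM driven by the observability inequality of Proposition~\ref{Desigualdad-obs}, uniform-in-$\varepsilon$ (and $\sigma$) bounds on $h_\varepsilon$ and on the penalized final states, and a weak-limit argument with lower semicontinuity of the norm to obtain \eqref{Estimate-control-1}. The only differences are cosmetic: you minimize the dual functional $J_\varepsilon$ over adjoint terminal data $(\phi_T,\psi_T)$, whereas the paper minimizes the conjugate functional $F_\varepsilon$ over controls with the final-state penalty $\tfrac{1}{2\varepsilon}\|(y(T),z(T))\|^2$ and recovers the same control through the characterization $h_\varepsilon=-\phi_\varepsilon|_\omega$, and your identification of the limit and of $y(T)=z(T)=0$ via Aubin--Lions and weak continuity of the trace at $t=T$ replaces the paper's duality argument with auxiliary source terms $(F_1,F_2)$; both are standard and yield the same conclusion.
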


\begin{proof}

For readibility, we have divided the proof in three steps.

\smallskip
\textbf{Step 1:} Let $\sigma\geq 1$ be fixed and consider the following minimization problem
\begin{equation*}
	\min_{h\in L^2((0,T)\times\omega)}F_{\varepsilon}(h)
\end{equation*} 
where, for every $\epsilon>0$, we write
\begin{equation}\label{e2.69}
	F_{\varepsilon}(h)=\frac{1}{2}\iint\limits_{(0,T)\times\omega}\abs{h}^2\dx\dt+\frac{1}{2\varepsilon}\pts{\int_{\Omega}\abs{y(T)}^2\dx+\int_{\Omega}\abs{z(T)}^2\dx}.
\end{equation} 
Since $F_{\varepsilon}$ is continuous, strictly convex, and coercive, there exists a unique minimizer that we denote by $h_{\varepsilon}$. By a classical procedure (see, for instance, \cite{Lio71}), i.e., obtaining the Euler-Lagrange equation for \eqref{e2.69} at the minimum $h_\varepsilon$ and a duality argument, the control $h_\varepsilon$ can be characterized as
\begin{equation}\label{eq:def_h_eps}
h_\varepsilon=-\phi_{\varepsilon}|_{\omega} \quad\text{in $Q_T$}
\end{equation}
where $\phi_\varepsilon$ is the solution of the first component of $(\phi_\varepsilon,\psi_\varepsilon)$ verifying 
\begin{equation*}%\label{eq:adjoint-epsilon}
\begin{cases}
    -{\phi_{\varepsilon}}_t-\Delta\phi_{\varepsilon}=a_{11}\phi_{\varepsilon}+a_{21}\psi_{\varepsilon} &\textnormal{in } Q_T, \\
    -{\psi_{\varepsilon}}_t-\sigma\Delta\psi_{\varepsilon}=a_{12}\phi_{\varepsilon}+a_{22}\psi_{\varepsilon} &\textnormal{in } Q_T, \\
    \dfrac{\partial \phi_{\varepsilon}}{\partial \nu}=\dfrac{\partial \psi_{\varepsilon}}{\partial \nu}=0 &\textnormal{on } \Sigma_T, \\
    \phi_{\varepsilon}(T,\cdot)=\varepsilon^{-1}y_{\varepsilon}(T),\quad \psi_{\varepsilon}(T,\cdot)=\varepsilon^{-1}z_{\varepsilon}(T) &\textnormal{in } \Omega,
\end{cases}
\end{equation*}
and where $(y_\varepsilon(T),z_{\varepsilon}(T))$ can be extracted from the solution to system \eqref{eq:r-d_controlled} with control $h=h_\varepsilon$, more precisely, 
\begin{equation}\label{eq:r-d_controlled-epsilon}
    \begin{cases}
    {y_{\varepsilon}}_t-\Delta y_{\varepsilon} = a_{11} y_{\varepsilon} + a_{12} z_{\varepsilon} +h_{\varepsilon}\mathbf{1}_{\omega} &\textnormal{in } Q_T, \\
    {z_{\varepsilon}}_t-\sigma\Delta z_{\varepsilon}= a_{21} y_{\varepsilon} + a_{22} z_{\varepsilon} &\textnormal{in } Q_T, \\
    \D \frac{\partial y_{\varepsilon}}{\partial \nu}=\frac{\partial z_{\varepsilon}}{\partial \nu}=0 &\text{on } \Sigma_T, \\
    y_{\varepsilon}(0,\cdot)=y^0,\quad z_{\varepsilon}(0,\cdot)=z^0  &\text{in }\ \Omega.
    \end{cases}
\end{equation}

Let us prove the following convergences
\begin{equation}\label{e2.89}
\begin{aligned}
	&h_{\varepsilon}\rightharpoonup h \quad\text{in}\quad L^2((0,T)\times\omega), \\
	&(y_{\varepsilon}(T), z_{\varepsilon}(T))\rightarrow (0,0) \quad\text{in}\quad [L^2(\Omega)]^2,
\end{aligned}
\end{equation}
for some $h$ in $L^2((0,T)\times\omega)$. Indeed, by duality between the solutions of $(y_{\varepsilon},z_{\varepsilon})$ and $(\phi_{\varepsilon},\psi_{\varepsilon})$, we have 
\begin{equation}\label{e2.96}
\small
\begin{aligned}
	&\frac{1}{\varepsilon}\int_{\Omega} \pts{\abs{y_{\varepsilon}(T)}^2+\abs{z_{\varepsilon}(T)}^2}\dx+\iint\limits_{(0,T)\times\omega}\abs{\phi_{\varepsilon}}^2\dx\dt=\int_{\Omega}\pts{y^0\phi_{\varepsilon}(0)+z^0\psi_{\varepsilon}(0)}\dx.
\end{aligned}
\end{equation}
Applying Cauchy-Schwarz inequality and using estimate \eqref{e1.44} in the right-hand side of \eqref{e2.96} we see that
\begin{equation}\label{e2.98}
\small
\begin{aligned}
	&\int_{\Omega}\pts{y^0\phi_{\varepsilon}(0)+z^0\psi_{\varepsilon}(0)}\,dx\leq \nor{\pts{y^0,z^0}}_{\pts{L^2(\Omega)}^2}\pts{e^{CK}\iint\limits_{(0,T)\times\omega}\abs{\phi_{\varepsilon}}^2\dx\dt}^{\frac{1}{2}}.
\end{aligned}
\end{equation}
and combining \eqref{e2.96}, \eqref{e2.98}, and applying Young's inequality with $\delta>0$ to the right-hand side of \eqref{e2.98} we get
\begin{equation}\label{e2.101}
\small
\begin{aligned}
	\frac{1}{\varepsilon}\int_{\Omega} \pts{\abs{y_{\varepsilon}(T)}^2+\abs{z_{\varepsilon}(T)}^2}\dx&+\iint\limits_{(0,T)\times\omega}\abs{h_{\varepsilon}}^2\dx\dt \\
    &\leq e^{CK}\nor{\pts{y^0,z^0}}_{\pts{L^2(\Omega)}^2}^2 +\delta\iint\limits_{(0,T)\times\omega}\abs{h_{\varepsilon}}^2\dx,
\end{aligned}
\end{equation}
where we have used \eqref{eq:def_h_eps}. Taking $\delta>0$  sufficiently small, we can eliminate the local term in the right-hand side of \eqref{e2.101} to obtain
\begin{equation}\label{e2.102a}
\small
\begin{aligned}
	\frac{1}{\varepsilon}\int_{\Omega} \pts{\abs{y_{\varepsilon}(T)}^2+\abs{z_{\varepsilon}(T)}^2}\,dx&+\iint\limits_{(0,T)\times\omega}\abs{h_{\varepsilon}}^2\,dxdt \leq e^{CK}\nor{\pts{y^0,z^0}}_{\pts{L^2(\Omega)}^2}^2.
\end{aligned}
\end{equation}
Note that this estimate is independent of the diffusion coefficient $\sigma$ and the parameter $\varepsilon$. Therefore, the estimate \eqref{e2.102a} implies the convergences of \eqref{e2.89}. 
To conclude this step, we obtain a uniform bound on the control $h$ with respect to $\sigma$ as follows. Since 
\begin{equation}\label{eq:e.85}
    h_{\varepsilon}\rightharpoonup h \quad\text{in}\quad L^2((0,T)\times\omega),
\end{equation}
holds in \eqref{e2.89} we can apply Fatou's Lemma  (see \cite[Colloraly~II.2.8]{Boyer12}) to get
\begin{equation}\label{Estimate-Fatou}
    \nor{h}_{L^2(0,T;L^2(\omega))}\leq \liminf_{\varepsilon\rightarrow 0}\nor{h_{\varepsilon}}_{L^2(0,T;L^2(\omega))}.
\end{equation}
Using \eqref{e2.102a} in the right-hand side of \eqref{Estimate-Fatou} we obtain
\begin{equation}\label{Estimate-control}
    \nor{h}_{L^2(0,T;L^2(\omega))}\leq e^{CK}\pts{\nor{y^0}_{L^2(\Omega)}^2+\nor{z^0}_{L^2(\Omega)}^2},
\end{equation}
where $C$ is independent of $\sigma$, $\varepsilon$ and where $K$ is defined in \eqref{e1.45}. This proves the control estimate in \eqref{Estimate-control-1}.

\smallskip
\textbf{Step 2:} Here we prove that for any $\sigma\geq 1$ fixed
 \begin{equation}\label{e2.102}
\begin{aligned}
	&y_{\varepsilon}\rightharpoonup y \quad \text{en} \quad L^2\pts{0,T;H^1(\Omega)}, \\
	&z_{\varepsilon}\rightharpoonup z \quad \text{en} \quad L^2\pts{0,T;H^1(\Omega)},
\end{aligned}
\end{equation}
uniformly with respect $\varepsilon>0$, where $(y,z)\in L^2(0,T;H^1(\Omega))$ satisfies \eqref{eq:r-d_controlled} with the control $h$ defined in Step 1. 

To this end, using Lemma \ref{Lem:energy-estimate-l}, the solution $(y_{\varepsilon},z_{\varepsilon})$ to \eqref{eq:r-d_controlled-epsilon} satisfies 
\begin{equation*}%\label{e2.103}
\begin{aligned}
&\nor{y_{\varepsilon}}_{L^2\pts{0,T;H^1(\Omega)}}^2+\nor{z_{\varepsilon}}_{L^2\pts{0,T;H^1(\Omega)}}^2\\
&\quad \leq e^{C\tilde{K}}\pts{\nor{y^0}_{L^2(\Omega)}^2+\nor{z^0}_{L^2(\Omega)}^2+\nor{h_{\varepsilon}}_{L^2(0,T;L^2(\omega))}^2}, 
\end{aligned}
\end{equation*}
where $C>0$  is independent of $\sigma$ and $\varepsilon$, and where we recall that $\Tilde{K}$ is defined in \eqref{Constante-estimado-energia}. Using \eqref{e2.102a} on the right-hand of the above expression yields 
\begin{equation*}%\label{e2.103a}
\begin{aligned}	&\nor{y_{\varepsilon}}_{L^2\pts{0,T;H^1(\Omega)}}^2+\nor{z_{\varepsilon}}_{L^2\pts{0,T;H^1(\Omega)}}^2\leq e^{C\bar{K}}\pts{\nor{y^0}_{L^2(\Omega)}^2+\nor{z^0}_{L^2(\Omega)}^2}, 
\end{aligned}
\end{equation*}
with 
\begin{equation*}%\label{e2.103b}
\begin{aligned} \bar{K}&=1+T^{-1}+T\pts{1+\sum_{i,j=1}^2\nor{a_{ij}}_{L^{\infty}}+\sum_{i,j=1}^2\nor{a_{ij}}_{L^{\infty}}^2}\\
&\quad +\max_{i,j=1,2}\nor{a_{ij}}_{L^{\infty}}^{\frac{2}{3}}+\sum_{i,j=1}^2\nor{a_{ij}}_{L^{\infty}}^2.
\end{aligned}
\end{equation*}
Therefore, we can extract a subsequence (still denoted) $\lvs{(y_\varepsilon,z_{\varepsilon})}_{\varepsilon\geq 0}$ such that it verifies the weak convergences in \eqref{e2.102}. To check that the limits in \eqref{e2.102} satisfy \eqref{eq:r-d_controlled} with the control $h$ obtained in the previous step, we argue as follows.

 Let us denote by $(\hat{y},\hat{z})$ the solution to 
\begin{equation}\label{eq:r-d_controlled-gorro}
    \begin{cases}
    \hat y_t-\Delta \hat y = a_{11} \hat y + a_{12} \hat z +h\mathbf{1}_{\omega} &\textnormal{in } Q_T, \\
    \hat z_t-\sigma\Delta \hat z= a_{21} \hat y + a_{22} \hat z &\textnormal{in } Q_T, \\
    \D \frac{\partial \hat y}{\partial \nu}=\frac{\partial \hat z}{\partial \nu}=0 &\text{on } \Sigma_T, \\
    \hat y(0,\cdot)=y^0,\quad \hat z(0,\cdot)=z^0  &\text{in }\ \Omega, 
    \end{cases}
\end{equation}
where $h$ is a control provided by \eqref{e2.89}. For any $(F_1,F_2)\in [L^2(Q_T)]^2$ and any $(\Phi_T,\Psi_T)\in [L^2(\Omega)]^2$, let us introduce the following adjoint system
\begin{equation}\label{eq:adjoint-gen}
\begin{cases}
    -\Phi_t-\Delta\Phi=a_{11}\Phi+a_{21}\Psi+F_1 &\textnormal{in } Q_T, \\
    -\Psi_t-\sigma\Delta\Psi=a_{12}\Phi+a_{22}\Psi+F_2 &\textnormal{in } Q_T, \\
    \dfrac{\partial \Phi}{\partial \nu}=\dfrac{\partial \Psi}{\partial \nu}=0 &\textnormal{on } \Sigma_T,\\
    \Phi(T,\cdot)=\Phi_T,\quad \Psi(T,\cdot)=\Psi_T &\textnormal{in } \Omega.
\end{cases}
\end{equation}

Setting $(\Phi_T,\psi_T)=(0,0)$ in \eqref{eq:adjoint-gen}, we obtain by duality between \eqref{eq:r-d_controlled-gorro} and \eqref{eq:adjoint-gen} that
\begin{equation}\label{e2.122}
\begin{aligned}
	&-\int_{\Omega} \pts{y^0\Phi(0)+z^0\Psi(0)}\dx=-\iint_{Q_T}\pts{\hat yF_1+\hat zF_2}\dx\dt+\iint_{Q_T}h\Phi\,\dx\dt. 
\end{aligned}
\end{equation}
In the same spirit, from \eqref{eq:r-d_controlled-epsilon}, \eqref{eq:adjoint-gen}, and recalling that we have set zero initial data, we have 
\begin{equation}\label{e2.123}
\begin{aligned}
	&-\int_{\Omega} \pts{y^0\phi(0)+z^0\psi(0)}\dx=-\iint_{Q_T}\pts{y_{\varepsilon}F_1+z_{\varepsilon}F_2}\dx\dt+\iint_{Q_T}h_{\varepsilon}\phi\, \dx\dt. 
\end{aligned}
\end{equation}
From \eqref{eq:e.85} and \eqref{e2.102}, we can pass to the limit as $\varepsilon\to 0$ in  \eqref{e2.123} and this, together with \eqref{e2.122}, yields
\begin{equation*}%\label{e2.127}
\begin{aligned}
	\iint_{Q_T}\pts{y-\hat y}F_1\dx\dt+\iint_{Q_T}\pts{z-\hat z}F_2\, \dx\dt=0,
\end{aligned}
\end{equation*}
for all $\pts{F_1,F_2}\in [L^2(Q_T)]^2$.  This implies that $(y,z)=(\hat y,\hat{z})$ which proves our initial claim. 

\textbf{Step 3:} 
To conclude, let us see that the limit $h$ obtained in Step 1 is in fact a null control for \eqref{eq:r-d_controlled}. From the conclusion of Step 2, by duality between $(y,z)$ solution to \eqref{eq:r-d_controlled} and \eqref{eq:adjoint-gen} with $(F_1,F_2)=(0,0)$, we deduce
\begin{equation}\label{e2.132}
	\int_{\Omega}\pts{y(T)\Phi_T+z(T)\Psi_T}\dx=\iint\limits_{(0,T)\times\omega}h\Phi\,\dx\dt+\int_{\Omega}\pts{y^0\Phi(0)+z^0\Phi(0)}\dx\dt
\end{equation}
Similarly, we can readily check that the solutions $(y_{\varepsilon},z_{\varepsilon})$ and $(\Phi, \Psi)$  to \eqref{eq:r-d_controlled-epsilon} and \eqref{eq:adjoint-gen} (with $(F_1,F_2)=(0,0)$) respectively, verify 
\begin{equation}\label{e2.133}
	\int_{\Omega}\pts{y_{\varepsilon}(T)\Phi_T+z_{\varepsilon}(T)\Psi_T}\dx=\iint_{Q_T}h_{\varepsilon}\Phi\,\dx\dt+\int_{\Omega}\pts{y^0\Phi(0)+z^0\Phi(0)}\dx\dt. 
\end{equation}

Recalling the convergences provided in \eqref{e2.89} and passing the limit as  $\varepsilon\rightarrow 0$ in \eqref{e2.133} yield
\begin{equation}\label{e2.135}
	\iint\limits_{(0,T)\times\omega}h\Phi\,\dx\dt+\int_{\Omega}\pts{y^0\Phi(0)+z^0\Phi(0)}\dx\dt=0.
\end{equation}
Substituting \eqref{e2.135} in \eqref{e2.132} gives
\begin{equation*}%\label{e2.136}
	\int_{\Omega}\pts{y(T)\Phi_T+z(T)\Psi_T}\dx=0,
\end{equation*}
for all $\pts{\Phi_T,\Psi_T}\in [L^2(\Omega)]^2$. Then $y(T,\cdot)=z(T,\cdot)=0$ in $\Omega$ as claimed. This ends the proof.
\end{proof}

\section{Controllability of the nonlinear system: proof of \Cref{T1}}\label{seccion-5}
To prove the \Cref{T1}, we begin by considering a linearized system. Applying Taylor formula to the nonlinearities $f$ and $g$, we have that for every $\sigma\geq1$ fixed and each $(\bar{y},\bar{z})\in [L^2(Q_T)]^2$, system \eqref{eq:r-d_controlled-NL} can be expressed as
\begin{equation}\label{e2.160}
\begin{cases}
	y_t-\Delta y=a_{11}y+a_{12}z +h1_{\omega} &\text{in }  Q_T \\
	z_t-\sigma\Delta z=a_{21}y+a_{22}z & \text{in }  Q_T \\
	\frac{\partial y}{\partial \hat\eta}=\frac{\partial z}{\partial \hat\eta}=0, &\text{on} \Sigma_T \\
	y(0,\cdot)=y^0,\quad z(0,\cdot)=z^0, & \text{in } \Omega.
\end{cases}	
\end{equation}
where
\begin{equation*}%\label{e2.161}
\begin{aligned}
	&a_{11}=\int_0^1\frac{\partial f}{\partial y}\pts{\delta\bar{y},\delta\bar{z}}\dd \delta, \quad &a_{12}=\int_0^1\frac{\partial f}{\partial z}\pts{\delta\bar{y},\delta\bar{z}}\dd \delta,\\
	&a_{21}=\int_0^1\frac{\partial g}{\partial y}\pts{\delta\bar{y},\delta\bar{z}}\dd \delta, \quad &a_{22}=\int_0^1\frac{\partial g}{\partial z}\pts{\delta\bar{y},\delta \bar{z}}\dd \delta.
\end{aligned}	
\end{equation*}

We note that the assumption \ref{H3} implies that there exist a constant $\hat{a}_{21}>0$ such that $a_{21}(t,x)\geq\hat{a}_{21}$ or $-a_{21}(t,x)>\hat{a}_{21} $ for all $(t,x)\in(0,T)\times\omega$. Furthermore, by  assumption \ref{H1} there exist a constant $C$ that only depending on $T$, $\Omega$, $C_f$ and $C_g$ such that 
\begin{equation}\label{coeff}
    \nor{a_{ij}}_{L^\infty(Q_T)}\leq C,
\end{equation}
for $i,j=1,2$. 

Therefore the hypotheses of Theorem \ref{thm:Control-lineal} are satisfied and we can build a control $h$ such that the solution $(y,z)$ to \eqref{e2.160} satisfy 
\begin{align*}
y(T,x;\bar y,\bar z)=z(T,x;\bar y,\bar z)=0, \quad x\in\Omega,
\end{align*}
for each $\sigma\geq 1$ and each $(\bar{y},\bar{z})\in [L^2(Q_T)]^2$. By construction, such control is uniformly bounded with respect to $\sigma$, and thanks to \eqref{coeff} it is also uniformly bounded with respect to $(\bar{y},\bar{z})$.

Now, we consider the following map $\Lambda:[L^2(Q_T)]^2 \rightarrow [L^2(Q_T)]^2$ given by
\begin{equation*}
	\Lambda(\bar{y},\bar{z})=\pts{y(t,x,\bar{y},\bar{z}),z(t,x,\bar{y},\bar{z})},
\end{equation*} 
where $\pts{\bar{y},\bar{z}}\in[L^2(Q_T)]^2$ and $(y,z)$ is solution of \eqref{e2.160}. Due to the Aubin--Lions Lemma, the space
\begin{equation*}
	W=\left\{u:u\in L^2\pts{0,T;H^1(\Omega)}, \; u_t\in L^2\pts{0,T;\pts{H^1(\Omega)}'}\right\},  
\end{equation*}
is compactly embedded into $L^2(Q_T)$. This, together with the energy estimate given in Lemma~\ref{Lem:energy-estimate-l}, yields that $\Lambda$ is continuous and compact mapping in $[L^2\pts{Q_T}]^2$ into itself. Also, using the Lemma~\ref{Lem:energy-estimate-l} we can see that the set
\begin{equation*}
	M=\left\{(y,z)\in[L^2(Q_T)]^2:{(y,z)=\lambda \Lambda\pts{y,z}\quad\text{for some}\quad0\leq\lambda\leq1} \right\},
\end{equation*}
is bounded in $[L^2(Q_T)]^2$. Indeed, for every $\pts{y,z}\in M$, according to the energy estimate \eqref{energy-estimate-l} and the uniform bound on the control $h$ with respect to $\sigma\geq1$ given in \eqref{Estimate-control-1} we have that
\begin{equation*}
\begin{aligned}
    \nor{(y,z)}_{[L^2(Q_T)]^2}^2&=\nor{\lambda\Lambda(y,z)}_{[L^2(Q_T)]^2}^2\\
    &=\lambda^2\nor{(y,z)}_{[L^2(Q_T)]^2}^2\\
    &\leq e^{C\tilde{K}}\pts{\nor{y^0}_{L^2(\Omega)}^2+\nor{z^0}_{L^2(\Omega)}^2+\nor{h}_{L^2(0,T;L^2(\omega))}^2}\\
    &\leq e^{C\bar{K}}\pts{\nor{y^0}_{L^2(\Omega)}^2+\nor{z^0}_{L^2(\Omega)}^2}\\
    &\leq C\pts{\nor{y^0}_{L^2(\Omega)}^2+\nor{z^0}_{L^2(\Omega)}^2},
\end{aligned}
\end{equation*}
where in last line we have used \eqref{coeff} to obtain that the constant $C$ only depending on $T$, $\Omega$, $C_f$ and $C_g$. 

Therefore, the map $\Lambda$ satisfies all the hypotheses of Schaefer's fixed point theorem (see e.g. \cite[Section 9.2]{evans10}) and there is $(y,z)\in (L^2\pts{0,T;L^2(\Omega)})^2$ such that $\Lambda(y,z)=(y,z)$. Consequently, $y(T)=z(T))=0$ in $\Omega$. 
%For this, we can conclude that the semilinear reaction-diffusion system \eqref{eq:r-d_controlled} is null controllable.
Moreover, by construction is clear that $h=h(\sigma)$ is uniformly bounded as in \eqref{Estimate-control-1} and using \eqref{coeff} we obtain 
\begin{equation*}
\begin{aligned}
 	\nor{h(\sigma)}_{L^2((0,T)\times\omega)} \leq C\pts{{\nor{y^0}_{L^2(\Omega)}^2+\nor{z^0}_{L^2(\Omega)}^2}},
\end{aligned}
\end{equation*}
for every $\sigma\geq1$ fixed and some constant $C>0$ only depending on $\Omega$, $\omega$, $T$, $C_f$, and  $C_g$.

Since $(y,z)$ is solution of the linearized system \eqref{e2.160} for any $(\bar y,\bar z)\in [L^2(Q_T)]^2$ we have that satisfies the energy estimate \eqref{energy-estimate-l}. In particular, for the fixed point $(y,z)$ obtained above we get 
\begin{equation*}
\begin{aligned}
	&\nor{y}_{L^2\pts{0,T;H^1(\Omega)}}^2+\nor{z}_{L^2\pts{0,T;H^1(\Omega)}}^2\\
	&+\nor{y_t}_{L^2(0,T;(H^1(\Omega))')}^2+\nor{z_t}_{L^2(0,T;(H^1(\Omega))')}^2+ \sigma\iint_{Q_T}\abs{\nabla z}^2\,dx\,dt\\
	&\hspace{3cm}\leq e^{C\bar K}\pts{\nor{y^0}_{L^2(\Omega)}^2+\nor{z^0}_{L^2(\Omega)}^2} \\
	&\hspace{3cm}\leq C\pts{\nor{y^0}_{L^2(\Omega)}^2+\nor{z^0}_{L^2(\Omega)}^2},
\end{aligned}
\end{equation*}
where in the last line we have used again \eqref{coeff} to obtain that a constant $C$ only depending on $T$, $\Omega$, $\omega$, $C_f$, and $C_g$. Thus the proof of \Cref{T1} is finished.

%%%%%%%%%%%%%%%%%%%%%%%%%%%%%%%%%%%%%%%%%%%%%%%%%%%%%%%%%%%%%%%%%%%%%%%%%%%%%%%%%%%%%%%%%%%%%%%%%%%%%%%%%%%%%%%%%%%%%%%%%%%%%%%%%%%%%%%%%%%%%%%%%%%%%%%%%%%%%%%%%%%%%%%%%%%%%%%%%%%%%%%%%%%%%%%%%%%%%%
\section{Passing to the shadow limit: proof of \Cref{thm:main_1}} \label{seccion-6}
In this part, we assume without loss of generality that $\abs{\Omega}=1$ to simplify some computations. As before, for clarity we argue in several steps. 

\smallskip
{\bf Step 1:} Let us consider sequences $\lvs{y^{\sigma},z^{\sigma}}_{\sigma\geq 1}$ of solutions of \eqref{eq:r-d_controlled-NL} and controls $\lvs{h(\sigma)}_{\sigma\geq 1}$, such that the controls $h(\sigma)$ are provided by the Theorem~\ref{T1}. Therefore, $y^{\sigma}(T)=z^{\sigma}(T)=0$ in $\Omega$ for every $\sigma\geq1$. 

In this step, we will prove that  
\begin{equation}\label{strong-converge}
	(y^{\sigma},z^{\sigma})\rightarrow (y,\xi) \quad \text{in}\quad \quad L^2(Q_T)\times L^2(0,T),
\end{equation}
such that $y$ and $\xi$ satisfies the first equation of \eqref{eq:shadow_controller} in the weak sense. Moreover, we will check that $y(T)=0$. We argue as follows.  

For any $\sigma\geq1$, the solution $(y^{\sigma},z^{\sigma})$ to \eqref{eq:r-d_controlled-NL} satisfies the estimate \eqref{energy-estimate-sigma} and the control $h$ is uniformly bounded as in \eqref{eq:unif_bound}. Therefore $\lvs{y^{\sigma},z^{\sigma}}_{\sigma\geq 1}$ is a uniformly bounded sequence in $[L^2(0,T;H^1(\Omega))]^2$ (w.r.t. $\sigma$). Then,  we can extract a subsequence, still denoted $\lvs{y^{\sigma},z^{\sigma}}_{\sigma\geq 1}$ such that 
\begin{equation*}%\label{e3.193}
\begin{aligned}
	(y^{\sigma},z^{\sigma})\rightharpoonup (y,z) \quad \text{in}\quad (L^2(0,T;H^1(\Omega))^2.
\end{aligned}
\end{equation*}
Furthermore,  $\lvs{y_t^{\sigma},z_t^{\sigma}}_{\sigma\geq 1}$ is uniformly bounded in $(L^2(0,T;(H^1(\Omega))')^2$  with respect to the coefficient diffusion $\sigma\geq 1$. Thus, by Aubin-Lions Lemma, we get that
\begin{equation}\label{e3.194}
	(y^{\sigma},z^{\sigma})\rightarrow (y,z) \quad \text{in}\quad [L^2(Q_T)]^2.
\end{equation} 

According to Fatou's Lemma, using the strong convergence \eqref{e3.194} and arguing as in the proof of theorem~\ref{thm:Control-lineal}, we deduce 
\begin{equation}\label{e3.195}
	\nor{y}_{L^2(Q_T)}^2+\nor{z}_{L^2(Q_T)}^2\leq C\pts{\nor{y^0}_{L^2(\Omega)}^2+\nor{z^0}_{L^2(\Omega)}^2},
\end{equation}
where the constant $C>0$ is independent of the diffusion coefficient $\sigma\geq 1$. Since $f$ and $g$ are Lipschitz functions, we can use the strong convergences in \eqref{e3.194} to see that
\begin{equation*}%\label{e3.197}
\begin{aligned}
	&f(y^{\sigma},z^{\sigma})\rightarrow f(y,z) \quad \text{in}\quad L^2(Q_T),\\
	&g(y^{\sigma},z^{\sigma})\rightarrow g(y,z) \quad \text{in}\quad L^2(Q_T).
\end{aligned}
\end{equation*}

Moreover, from the energy estimate \eqref{energy-estimate-sigma} we know that $\lvs{\nabla y^{\sigma}}_{\sigma\geq 1}$ is uniformly bounded $L^2(Q_T)$ and 
\begin{equation*}
	\nor{\nabla y^{\sigma}}_{L^2(Q_T)}+\sigma\nor{\nabla z^{\sigma}}_{L^2(Q_T)}\leq C\pts{\nor{y^0}_{L^2(\Omega)}^2+\nor{z^0}_{L^2(\Omega)}^2},
\end{equation*} 
whence we deduce that 
\begin{equation}\label{e3.200}
\begin{aligned}
	&\nabla y^{\sigma}\rightharpoonup\nabla y \quad L^2(Q_T),\\
	&\nabla z^{\sigma}\rightarrow 0 \quad L^2(Q_T).
\end{aligned}
\end{equation} 
The second convergence in \eqref{e3.200} implies that the limit $z$ only depends on $t$, so we can write it as $z(t,\cdot)=\xi(t)$ for $t\in(0,T)$. Using this fact together with the convergence \eqref{e3.194} we obtain \eqref{strong-converge}.

Also, we recall that estimate \eqref{eq:unif_bound} says that the sequence of controls $\lvs{h(\sigma)}_{\sigma\geq 1}$ is uniformly bounded with respect $\sigma\geq 1$. Then we can extract a subsequence such that 
\begin{equation*}
	h(\sigma)\rightharpoonup h\text{ in } L^2(0,T;L^2(\omega)).
\end{equation*}
Therefore, the  convergences above imply that the $y$ and  $\xi$ satisfies the first equation of \eqref{eq:shadow_controller}, in the weak sense. Since $y^{\sigma}(T)=0$, we have that $y(T)=0$ in $\Omega$. It remains to prove that the solution $\xi$ satisfies the second equation of \eqref{eq:shadow_controller}. 

\smallskip

{\bf Step 2:} In this step we obtain a first estimation of the difference between $z(t,\cdot)$ and $\xi(t)$ in $L^2$-norm for $t\in(0,T)$. For this, we consider the heat semigroup with homogeneous Neumann boundary conditions on a domain $\Omega\subset\mathbb R^n$ denoted by $e^{t\sigma\Delta}$ for all $t\geq 0$ and write the solutions $z$ and $\xi$ of the second component of systems \eqref{eq:r-d_controlled-NL} and \eqref{eq:shadow_controller}, respectively, as follows 
\begin{equation*}%\label{e3.201}
\begin{aligned}
	&z^{\sigma}(t,x)=e^{t\sigma\Delta}z^0+\int_0^te^{(t-s)\sigma\Delta}g(y^{\sigma}(s,x),z^{\sigma}(s,x))\dd{s},\\
	&\xi(t)=\int_{\Omega}z^0\,dx+\int_0^t\int_{\Omega}g(y(s,x),\xi(s))\dx\dd{s}.
\end{aligned}
\end{equation*}
Subtracting the above solutions and computing the $L^2$-norm we get 
\begin{equation}\label{e3.202}
\begin{aligned}
	&\nor{z^{\sigma}(t,\cdot)-\xi(t)}_{L^2(\Omega)}\\
	&\leq\Bigg\|e^{t\sigma\Delta}z^0-\int_{\Omega}z^0\dx+\int_0^te^{(t-s)\sigma\Delta}\pts{g(y^{\sigma}(s,\cdot),z^{\sigma}(s,\cdot))}\dd{s}\\
    &\hspace{4.6cm}-\int_0^t\int_{\Omega}g(y(s,x),\xi(s,x))\dx\dd{s}\Bigg\|_{L^2(\Omega)}.
\end{aligned}
\end{equation}

Applying property $i)$ of Lemma~\ref{A.2} to the terms $c_1=\int_{\Omega}z^0\,dx$ and $c_2=\int_{\Omega}g(y(t,x),\xi(t))\dx$ in the right-hand side of \eqref{e3.202} we get
\begin{equation}\label{e3.203}
\begin{aligned}
	&\nor{z^{\sigma}(t,\cdot)-\xi(t)}_{L^2(\Omega)}\\
	&\leq\nor{e^{t\sigma\Delta }\pts{z^0-\int_{\Omega}z^0\dx}}_{L^2(\Omega)}\\
	&+\nor{\int_0^te^{(t-s)\sigma\Delta}\pts{g(y^{\sigma}(s,\cdot),z^{\sigma}(s,\cdot))-\int_{\Omega}g(y^{\sigma}(s,x),z^{\sigma}(s,x))}\dd{s}}_{L^2(\Omega)}\\
&+\nor{\int_0^te^{(t-s)\sigma\Delta}\int_{\Omega}g(y^{\sigma}(s,x),z^{\sigma}(s,x))-g(y(s,x),\xi(s,x))\dx\dd{s}}_{L^2(\Omega)}.
\end{aligned}
\end{equation}
We apply once again property $i)$ of Lemma~\ref{A.2} to the term $c_3=\int_{\Omega}g(y^{\sigma}(s,x),z^{\sigma}(s,x))-g(y(s,x),\xi(s,x))\dx$ in the right-hand side of \eqref{e3.203} to obtain
\begin{equation}\label{e3.204}
\begin{aligned}
	&\nor{z^{\sigma}(t,\cdot)-\xi(t)}_{L^2(\Omega)}\\	
	&\leq\nor{e^{\sigma\Delta t}\pts{z^0-\int_{\Omega}z^0\dx}}_{L^2(\Omega)}\\
	&+\nor{\int_0^te^{\sigma\Delta (t-s)}\pts{g(y^{\sigma}(s,\cdot),z^{\sigma}(s,\cdot))-\int_{\Omega}g(y^{\sigma}(s,x),z^{\sigma}(s,x))\dx}\dd{s}}_{L^2(\Omega)}\\
&+\abs{\int_0^t\int_{\Omega}g(y^{\sigma}(s,x),z^{\sigma}(s,x))-g(y(s,x),\xi(s))\dx\dd{s}}=:M_1+M_2+M_3.
\end{aligned}
\end{equation}

{\bf Step 3:} In this step, let us estimate each term $M_i$, $1\leq i\leq 3$. Applying property $ii)$ of Lemma~\ref{A.2}, with $p=q=2$ and $x_0=z^0-\int_{\Omega}z^0\,dx$, we obtain the following estimate for $M_1$
\begin{equation}\label{e3.205}
\begin{aligned}
	%t^{\frac{n}{2}}\nor{e^{\sigma\Delta t}\pts{z^0-\int_{\Omega}%z^0\,dx}}_{L^2(\Omega)}
	t^{\frac{n}{2}}M_1&\leq Ct^{\frac{n}{2}}e^{-\lambda_1\sigma t}\nor{z^0}_{L^2(\Omega)}\\
	&\leq C\sigma^{-\frac{n}{2}}(\sigma t)^{\frac{n}{2}}e^{-\lambda_1\sigma t}\nor{z^0}_{L^2(\Omega)}\\
	&\leq C\sigma^{-\frac{n}{2}}\sup_{s\geq 0}s^{\frac{n}{2}}e^{-\lambda_1 s}\nor{z^0}_{L^2(\Omega)}\\
	&\leq C\sigma^{-\frac{n}{2}}\nor{z^0}_{L^2(\Omega)},
\end{aligned}
\end{equation}
for $t\in[0,T]$. 

To estimate $M_2$, let us define $R(s,x)=R_1(s,x)-R_2(s)$ where $R_1(s,x)=g(y^{\sigma}(s,x),z^{\sigma}(s,x))$ and $R_2(s)=\int_{\Omega}R_1(s,x)\,dx$. First, we show that $R(s,\cdot)$ is uniformly bounded in $L^2(\Omega)$ with respect $\sigma\geq 1$. Indeed, using that $g$ is Lipschitz and $g(0,0)=0$, by applying the energy estimate \eqref{energy-estimate-sigma}, we have
\begin{equation}\label{e3.209}
\begin{aligned}
	\nor{R_1(s,\cdot)}_{L^2(\Omega)}^2&\leq C\pts{\nor{y^{\sigma}(s,\cdot)}_{L^2(\Omega)}^2 +\nor{z^{\sigma}(s,\cdot)}_{L^2(\Omega)}^2}\\
	&\leq C\pts{\nor{y^0}_{L^2(Q_T)}^2+\nor{z^0}_{L^2(Q_T)}^2}, 
\end{aligned} 
\end{equation}
where $C$ is independent of $\sigma\geq 1$. To conclude that $R(s,\cdot)$ is uniformly bounded in $L^2(\Omega)$ with respect to $\sigma\geq 1$ we apply Jensen inequality and \eqref{e3.209} to get
\begin{equation*}%\label{e3.210}
\begin{aligned}
	\nor{R_2(s,\cdot)}_{L^2(\Omega)}^2&\leq \int_{\Omega}\abs{\int_{\Omega}R_1(s,\bar{x})\dd\bar{x}}^2\dx\leq C \int_{\Omega}\abs{R_1(s,x)}^2\dx \\
	&\leq C\pts{\nor{y^0}_{L^2(Q)}^2+\nor{z^0}_{L^2(Q)}^2}.
\end{aligned} 
\end{equation*}

Rewriting $M_2$ and by H\"older inequality, we have
\begin{equation*}
	M_2=\nor{\int_0^te^{\sigma\Delta (t-s)}R(s,\cdot)\dd{s}}_{L^2(\Omega)}\leq\int_0^t\nor{e^{\sigma\Delta (t-s)}R(s,\cdot)}_{L^2(\Omega)}\dd{s}.
\end{equation*}
We note that $R\in L^2(\Omega)$ and by definition of $R$ we have that $\int_{\Omega}R(s,x)\dx=0$. Therefore we can use the property $ii)$ of the Lemma~\ref{A.2} with $p=q=2$ and $x_0=R(s,x)$ to obtain 
\begin{equation}\label{e3.213}
\begin{aligned}
	\sup\limits_{0\leq t\leq T}t^{\frac{n}{2}}M_2&\leq\sup\limits_{0\leq t\leq T}t^{\frac{n}{2}}\int_0^t\nor{e^{\sigma\Delta (t-s)}R(s,\cdot)}_{L^2(\Omega)}\dd{s}	\\
	&\leq\sup\limits_{0\leq t\leq T} t^{\frac{n}{2}}C\int_0^te^{-(t-s)\lambda_1\sigma}\dd{s}\nor{R(s,\cdot)}_{L^2(\Omega)}\\
	&\leq C \sup\limits_{0\leq t\leq T}t^{\frac{n}{2}}\frac{1-e^{-t\lambda_1\sigma}}{\lambda_1\sigma}\sup\limits_{0\leq t\leq T}\nor{R(s,\cdot)}_{L^2(\Omega)}\\
	&\leq\frac{C}{\lambda_1\sigma}\sup\limits_{0\leq t\leq T}\nor{R(s,\cdot)}_{L^2(\Omega)}.
\end{aligned}
\end{equation}

To finish this step, we estimate $M_3$ as follows. Using that $g$ is a Lipschitz function and applying the H\"older inequality we get
\begin{equation}\label{e3.214}
\begin{aligned}
	&M_3=\abs{\int_0^t\int_{\Omega}g(y^{\sigma}(s,x),z^{\sigma}(s,x))-g(y(s,x),\xi(s,x))\dx\dd{s}}\\
	&\leq C_g\int_0^t\int_{\Omega}\abs{(y^{\sigma}(s,x)-y(s,x)}+\abs{z^{\sigma}(s,x))-\xi(s,x)}\dx\dd{s}\\
	&\leq C\int_0^t\pts{\nor{y^{\sigma}-y}_{L^2(\Omega)} +\nor{z^{\sigma}-\xi}_{L^2(\Omega)}}\dd{s}. 
\end{aligned}
\end{equation}

{\bf Step 4:} In this step we use the inequality \eqref{e3.204} together with the estimates of $M_1$, $M_2$ and $M_3$ to conclude the proof. Putting together \eqref{e3.204} and \eqref{e3.214} allows us to write
\begin{equation}\label{e3.215}
\begin{aligned}
	\nor{z^{\sigma}(t,\cdot)-\xi(t)}_{L^2(\Omega)}\leq A(t)+C\int_0^tY(s)\dd{s},
\end{aligned}
\end{equation}
where 
\begin{equation*}\label{e3.216}
	Y(t)=\nor{y^{\sigma}(s,x)-y(s,\cdot)}_{L^2(\Omega)}+\nor{z^{\sigma}(t,\cdot)-\xi(t)}_{L^2(\Omega)},
\end{equation*}
\begin{equation*}\label{e3.217}
    A(t)=M_1(t)+M_2(t),
\end{equation*}
for $t\in(0,T)$. Moreover, for any $t_0\in (0,T)$, we can choose $\varepsilon\in (0,t_0)$ and using inequality \eqref{e3.215} we obtain the following estimate 
\begin{equation*}\label{e3.219}
\begin{aligned}
	&\nor{z^{\sigma}(t,\cdot)-\xi(t)}_{L^2(\Omega)}\leq A_{\sigma,\varepsilon,T}+C\varepsilon^{\frac{1}{2}} \pts{\int_0^\varepsilon\abs{Y(s)}^2\,ds}^{\frac{1}{2}}\\
    &\hspace{0.5cm}+C\int_{\varepsilon}^t\nor{y^{\sigma}(s,x)-y(s,\cdot)}_{L^2(\Omega)}\dd{s}
	+C\int_{\varepsilon}^t\nor{z^{\sigma}(t,\cdot)-\xi(t)}_{L^2(\Omega)}\dd{s},
\end{aligned}
\end{equation*}
for $t\in(\varepsilon,T)$, where $A_{\sigma,\varepsilon,T}=\sup\limits_{t\in\cts{\varepsilon,T}}A(t)$. Applying Gronwall's inequality
\begin{equation*}%\label{e3.220}
\begin{aligned}
	&\nor{z^{\sigma}(t,\cdot)-\xi(t)}_{L^2(\Omega)}\\
	& \quad \leq   \left(A_{\sigma,\varepsilon,T}+C\varepsilon^{\frac{1}{2}} \pts{\int_0^\varepsilon\abs{Y(s)}^2\,ds}^{\frac{1}{2}} \right. \\
    &\left. \qquad\quad +\, C\int_{\varepsilon}^T\nor{y^{\sigma}(s,x)-y(s,\cdot)}_{L^2(\Omega)}\,ds\right)e^{C(T-\varepsilon)},
\end{aligned}
\end{equation*}
for $t\in(\varepsilon,T)$. 

Notice that estimates \eqref{e3.205} and \eqref{e3.213} imply that $A_{\sigma,\varepsilon,T}$ tends towards $0$ when $\sigma\rightarrow \infty$. On the other hand, using the definition of $Y$ and applying triangle inequality give
\begin{equation}\label{e3.216a}
\small
\begin{aligned}
	&\int_0^\varepsilon\abs{Y(s)}^2\dd{s}\leq C\pts{\nor{y^{\sigma}-y}_{L^2(0,\varepsilon;L^2(\Omega))}^2+\nor{z^{\sigma}-\xi}_{L^2(0,\varepsilon;L^2(\Omega))}^2}\\
    &\hspace{0.1cm}\leq C\pts{\nor{y^{\sigma}}_{L^2(0,T;L^2(\Omega))}^2+\nor{y}_{L^2(0,T;L^2(\Omega))}^2+\nor{z^{\sigma}}_{L^2(0,T;L^2(\Omega))}^2+\nor{\xi}_{L^2(0,T)}^2}.
\end{aligned}
\end{equation}
Recall from \eqref{strong-converge} that we have $z^{\sigma}\rightarrow \xi$ in $L^2(0,T)$, then $\xi$ satisfies \eqref{e3.195}, that is to say
\begin{equation}\label{estimate-xi}
	\nor{\xi}_{L^2(0,T)}^2\leq C\pts{\nor{y^0}_{L^2(\Omega)}^2+\nor{z^0}_{L^2(\Omega)}^2},
\end{equation} 
where $C$ is independent of the diffusion coefficient $\sigma\geq 1$. 

Using \eqref{estimate-xi} together with \eqref{energy-estimate-sigma} and \eqref{e3.195} allows us to estimate the right-hand side of \eqref{e3.216a} as follows 
\begin{equation*}
	\int_0^\varepsilon\abs{Y(s)}^2\dd{s}\leq C\nor{(y^0,z^0)}_{L^2(\Omega)}^2.
\end{equation*}
Therefore, the term $\pts{\int_0^\varepsilon\abs{Y(s)}^2\dd{s}}^{1/2}$ is uniformly  bounded with respect to $\sigma$ and $\varepsilon$.

From Step~1, we have that $y^{\sigma}\rightarrow y$ in $L^2(Q)$, then the term $\int_{\varepsilon}^T\nor{y^{\sigma}(s,x)-y(s,\cdot)}_{L^2(\Omega)}\dd{s}$ tend to zero when $\sigma\rightarrow\infty$. Then for every $t_0\in (0,T)$ and one can choose $\varepsilon\in (0,t_0)$ small enough so 
\begin{equation}\label{e3.223}
	\lim_{\sigma\to \infty}\sup_{t\in[t_0,T]}\nor{z^{\sigma}(t,\cdot)-\xi(t)}_{L^2(\Omega)}=0.
\end{equation}
Moreover, using the limit \eqref{e3.223} and the fact that $z^{\sigma}(T)=0$ for every $\sigma\geq 1$, we have that $\xi(T)=0$. To conclude, we note that by the uniqueness of the limit we obtain that $z=\xi$ in $(t_0,T)\times\Omega$. This ends the proof of \Cref{thm:main_1}.

\bigskip

\bmhead{Acknowledgments}
This work has received support from Project A1-S-17475 of CONACyT, Mexico, and by Projects IN109522 and IN104922 of DGAPA-UNAM, Mexico. The work of the first author was supported by the program ``Estancias Posdoctorales por México para la Formación y Consolidación de las y los Investigadores por México'' of CONACyT while the second author was supported by the program ``Becas Nacionales'' of the same institution. 

The first author would like to thank all members of the Departments of Mathematics and Mechanics of IIMAS-UNAM for their kind hospitality during his research stay which was useful for developing the first version of this manuscript. He would also like to thank Prof. Luz de Teresa (IM-UNAM) and Prof. Kévin Le Balc'h (INRIA) for fruitful discussions about the controllability of coupled parabolic systems.

\begin{appendices}

\section{Appendix}\label{Apendice}
\begin{proposition}\label{A.1}
For any $\epsilon>0$ there exists $\lambda_0>0$ such that, for every $\lambda>\lambda_0$ such that
\begin{equation}\label{e1.40}
	e^{\hat\alpha s}\leq e^{\pts{1+\epsilon}\alpha^*s},
\end{equation}
for all $s>0$.
\end{proposition}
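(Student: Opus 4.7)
The plan is to reduce the claimed exponential inequality to a pointwise scalar inequality in $t$, and then to an algebraic inequality in a single variable $u=e^{\lambda\|\eta^0\|_\infty}$. Since $s>0$, the desired bound $e^{\hat\alpha s}\le e^{(1+\epsilon)\alpha^* s}$ is equivalent to
\begin{equation*}
\hat\alpha(t)\le (1+\epsilon)\,\alpha^*(t)\qquad \text{for every } t\in(0,T),
\end{equation*}
so it suffices to establish this inequality uniformly in $t$ for $\lambda$ large.

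First I would unfold the definitions in \eqref{e1.2}--\eqref{e1.3}. Because $\eta^0>0$ in $\Omega$ and $\eta^0=0$ on $\partial\Omega$, the extrema of $e^{\lambda\eta^0}$ on $\bar\Omega$ are $1$ (minimum) and $e^{\lambda\|\eta^0\|_\infty}$ (maximum). Writing $M:=\|\eta^0\|_\infty$, this gives
\begin{equation*}
\hat\alpha(t)=\frac{e^{2\lambda M}-1}{t(T-t)},\qquad \alpha^*(t)=\frac{e^{2\lambda M}-e^{\lambda M}}{t(T-t)}.
\end{equation*}
The common positive factor $1/[t(T-t)]$ cancels, so the task reduces to proving the purely algebraic inequality
\begin{equation*}
e^{2\lambda M}-1\le (1+\epsilon)\bigl(e^{2\lambda M}-e^{\lambda M}\bigr).
\end{equation*}

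Next, I would introduce $u:=e^{\lambda M}\ge 1$ and rearrange the inequality to
\begin{equation*}
\epsilon\,u^2-(1+\epsilon)\,u+1\ge 0.
\end{equation*}
The discriminant of this quadratic in $u$ is $(1+\epsilon)^2-4\epsilon=(1-\epsilon)^2$, so the roots are $u=1$ and $u=1/\epsilon$, giving the factorization $\epsilon(u-1)(u-1/\epsilon)\ge 0$. Hence the inequality holds provided $u\ge \max\{1,1/\epsilon\}$.

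Finally, I would choose $\lambda_0>0$ large enough so that $e^{\lambda_0 M}\ge \max\{1,1/\epsilon\}$; concretely, $\lambda_0:=\max\{0,-\ln\epsilon\}/M$ works. Then for every $\lambda>\lambda_0$ one has $u=e^{\lambda M}>1/\epsilon$, the quadratic is non-negative, and the chain of equivalences yields \eqref{e1.40}. There is no real obstacle: the argument is a short algebraic check once the correct extrema of $e^{\lambda\eta^0}$ on $\bar\Omega$ are identified. The only point that requires a moment of care is that $\min_{\bar\Omega}\eta^0=0$ (since $\eta^0$ vanishes on $\partial\Omega$), which is what produces the clean factorization.
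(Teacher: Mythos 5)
Your proof is correct and follows essentially the same route as the paper's (which only defers to the cited reference without giving details): identifying $\min_{\bar\Omega}\eta^0=0$ and $\max_{\bar\Omega}\eta^0=\|\eta^0\|_\infty$, cancelling the common positive factor $1/(t(T-t))$, and reducing to the algebraic inequality $\epsilon u^2-(1+\epsilon)u+1\ge 0$ for $u=e^{\lambda\|\eta^0\|_\infty}$ large is exactly the standard argument behind this weight comparison. One cosmetic fix: for $\epsilon\ge 1$ your formula gives $\lambda_0=0$, so take e.g. $\lambda_0=\max\{1,\,-\ln\epsilon/\|\eta^0\|_\infty\}$ to meet the literal requirement $\lambda_0>0$.
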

The proof of this Lemma is similar to \cite{LM18}. The following results appears in \cite{MCHSKS18}.
\begin{lem}\label{A.2}
Let $ \lvs{e^{t\sigma\Delta}}$ be the heat semigroup with the Neumann boundary conditions in a bounded domain $\Omega\subset \mathbb R^n$ with smooth boundary and such that $\abs{\Omega}=1$.
\begin{enumerate}{\roman{enumi}}
\item For every $C\in\mathbb R$, we have $e^{t\sigma\Delta}C=C$ for all $t\geq 0$.
\item $\nor{e^{t\sigma\Delta}z_0}_{L^2(\Omega)}\leq C\pts{1+\pts{t\sigma}^{-\frac{n}{2}\pts{\frac{1}{q}-\frac{1}{p}}}}e^{-\lambda_1\sigma t}\nor{z_0}_{L^2(\Omega)}$.
\end{enumerate}  
\end{lem}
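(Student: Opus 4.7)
The plan for Lemma~\ref{A.2}(i) is immediate from uniqueness of the Neumann Cauchy problem for the heat equation: the function $u(t,x)\equiv C$ satisfies $u_t-\sigma\Delta u=0$ with $\partial_{\nu}u=0$ and $u(0,\cdot)=C$, so the semigroup acts trivially on constants. This is what we really need in the body of the paper, since it allows inserting or removing spatial averages inside $e^{t\sigma\Delta}$ at no cost (exactly how it is used in \eqref{e3.202}--\eqref{e3.204}).

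For (ii), the natural tool is the spectral decomposition of the Neumann Laplacian on the bounded smooth domain $\Omega$. Let $\{\phi_k\}_{k\geq 0}$ be an $L^2(\Omega)$-orthonormal basis of eigenfunctions with eigenvalues $0=\mu_0<\mu_1\leq\mu_2\leq\cdots$; since $\abs{\Omega}=1$, the ground state is $\phi_0\equiv 1$ and $\mu_1=\lambda_1$ coincides with the Poincar\'e constant. The plan is to split $z_0=\bar z_0+w_0$ with $\bar z_0:=\int_{\Omega}z_0\,\dx$ and $w_0$ of zero mean. By part (i), $e^{t\sigma\Delta}\bar z_0=\bar z_0$, while $w_0$ belongs to the orthogonal complement of the constants, so Parseval gives the spectral-gap decay
\begin{equation*}
\nor{e^{t\sigma\Delta}w_0}_{L^2(\Omega)}\leq e^{-\lambda_1\sigma t}\nor{w_0}_{L^2(\Omega)}.
\end{equation*}
In the intended application (Step 3 of the proof of \Cref{thm:main_1}) the datum is already mean-zero, so this is the decay we truly need.

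To obtain the algebraic prefactor $(t\sigma)^{-\frac{n}{2}(1/q-1/p)}$, I would invoke ultracontractivity of the Neumann heat semigroup on a bounded smooth domain. Gaussian upper bounds on the heat kernel (classical in this setting, see e.g.\ Davies or Ouhabaz) yield $L^q\to L^p$ smoothing of the form $\nor{e^{\tau\Delta}}_{L^q\to L^p}\leq C\tau^{-\frac{n}{2}(1/q-1/p)}$ for short times $\tau\in(0,1]$, and boundedness (no blow-up) for $\tau\geq 1$, which together produce the factor $C(1+\tau^{-\frac{n}{2}(1/q-1/p)})$. Rescaling $\tau=\sigma t$ transfers the $\sigma$-dependence into the prefactor without affecting the constant $C$. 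The full estimate then follows from a semigroup splitting $e^{t\sigma\Delta}=e^{(t/2)\sigma\Delta}\circ e^{(t/2)\sigma\Delta}$: the first factor contributes the smoothing constant in $(t\sigma)^{-\frac{n}{2}(1/q-1/p)}$ and the second, acting on the mean-zero part, contributes the exponential decay $e^{-\lambda_1\sigma t/2}$ (the factor $1/2$ versus $1$ is harmless up to renaming $C$).

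The main technical point I anticipate is getting the ultracontractivity constant uniformly in $\sigma$; however, this is essentially automatic because the heat kernel of $\sigma\Delta$ is a rescaling of the heat kernel of $\Delta$ in the time variable, and bounded smooth domains with Neumann boundary conditions satisfy the required Gaussian upper bounds with constants depending only on $\Omega$ and $n$. A second delicate point is the assertion in the statement that the right-hand side is $\nor{z_0}_{L^2(\Omega)}$ regardless of the exponents $p,q$; this is consistent with the use $p=q=2$ made in the paper but should, more properly, read $\nor{z_0}_{L^q(\Omega)}$ for the smoothing interpretation---the proof sketched above establishes the estimate in that more informative form, from which the statement as written follows by the embedding $L^2(\Omega)\hookrightarrow L^q(\Omega)$ whenever $q\leq 2$ (recall $\abs{\Omega}=1$).
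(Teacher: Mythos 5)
The paper does not actually prove \Cref{A.2}: it is quoted verbatim from \cite{MCHSKS18} (with \Cref{A.1} attributed to an argument as in \cite{LM18}), so there is no in-paper proof to compare against. Your reconstruction is the standard one and is essentially correct: constants are preserved by uniqueness of the Neumann problem, the spectral gap of the Neumann Laplacian gives $e^{-\lambda_1\sigma t}$ decay on mean-zero data, and Gaussian/ultracontractivity bounds for the Neumann heat kernel give the $L^q\to L^p$ smoothing prefactor, with the $\sigma$-dependence entering only through the time rescaling $\tau=\sigma t$. You also correctly spot two defects of the statement as written: the exponential decay can only hold for mean-zero $z_0$ (otherwise take $z_0\equiv 1$), and the right-hand side should carry $\nor{z_0}_{L^q(\Omega)}$ for the smoothing interpretation; both are harmless in the paper, where the lemma is only invoked with $p=q=2$ and mean-zero data ($z^0-\int_\Omega z^0\,\dx$ and $R(s,\cdot)$ in Step 3 of \Cref{seccion-6}).

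One small technical slip: after splitting $e^{t\sigma\Delta}=e^{(t/2)\sigma\Delta}\circ e^{(t/2)\sigma\Delta}$ you obtain the rate $e^{-\lambda_1\sigma t/2}$, and this loss is \emph{not} ``harmless up to renaming $C$'': no constant uniform in $t$ and $\sigma$ converts $e^{-\lambda_1\sigma t/2}$ into $e^{-\lambda_1\sigma t}$. Either state the lemma with rate $\lambda_1/2$ (amply sufficient for \eqref{e3.205} and \eqref{e3.213}, where any uniform exponential rate does the job), or recover the full rate by splitting at a smoothing time $\tau=\min\lvs{t/2,\,\sigma^{-1}}$, so that $e^{-\lambda_1\sigma(t-\tau)}\leq e^{\lambda_1}e^{-\lambda_1\sigma t}$ and the loss is absorbed into the constant. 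With that adjustment your argument is complete.
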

\end{appendices}

\bibliographystyle{plain}
\small{\bibliography{sn-bibliography}}

\end{document}